\theoremstyle{plain}
\newtheorem{thm}{Theorem}[section]
\newtheorem{lem}{Lemma}[section]
\newtheorem{prop}{Proposition}[section]
\theoremstyle{definition}
\newtheorem{defn}{Definition}[section]
\theoremstyle{remark}
\newtheorem{rmk}{Remark}[section]
\newcommand{\norm}[1]{\left\lVert#1\right\rVert}
\newcommand{\Lip}{\operatorname{Lip}}
\let\orgdescriptionlabel\descriptionlabel
\renewcommand*{\descriptionlabel}[1]{%
  \let\orglabel\label
  \let\label\@gobble
  \phantomsection
  \edef\@currentlabel{#1\unskip}%
  \let\label\orglabel
  \orgdescriptionlabel{#1}%
}
\numberwithin{equation}{section}
\begin{document}
	
\title[ Lipschitz regularity for fully nonlinear elliptic equations with $(p,q)$-growth]{Lipschitz regularity for fully nonlinear elliptic equations with $(p,q)$-growth}

\author[Byun]{Sun-Sig Byun}
\address{Department of Mathematical Sciences and Research Institute of Mathematics,
	Seoul National University, Seoul 08826, Republic of Korea}
\email{byun@snu.ac.kr}

\author[Kim]{Hongsoo Kim}
\address{Department of Mathematical Sciences, Seoul National University, Seoul 08826, Republic of Korea}
\email{rlaghdtn98@snu.ac.kr}

\makeatletter
\@namedef{subjclassname@2020}{\textup{2020} Mathematics Subject Classification}
\makeatother
\subjclass[2020]{35B65, 35D40, 35J15, 35J25}
\keywords{Fully nonlinear elliptic equations, (p,q)-growth, Lipschitz regularity}

\everymath{\displaystyle}

\begin{abstract}
	We prove the interior and global Lipschitz regularity results for a solution of fully nonlinear equations with $(p,q)$-growth.
    We prove that for a small gap $q-p$, a solution is locally or globally Lipschitz continuous.
    We also prove that a given H\"older continuous solution is Lipschitz continuous under improved bounds for the gap.
    These gap conditions are similar to those required for the regularity of double phase problems in divergence form as in \cite{Mingione152, Mingione18}.
\end{abstract}

\maketitle

\section{Introduction} \label{sec1}
In this paper we study interior and global Lipschitz regularity of viscosity solutions of the following fully nonlinear elliptic equation,
\begin{align} \label{PDE}
    F(D^2u, Du, x) = f(x,Du) \quad \text{in } \Omega,
\end{align}
where $F=F(M,z,x)$ is nonuniformly elliptic with $(p,q)$-growth condition
\begin{align} \label{pqgrowth}
    \lambda |z|^p \norm{N} \leq F(M+N,z,x)- F(M,z,x) \leq \Lambda(|z|^p+|z|^q)\norm{N}
\end{align}
for any $N\geq 0$ with $0<\lambda\leq \Lambda$, $-1<p\leq q$ and a small gap $q-p$, together with the structural assumptions \ref{A2}-\ref{A4}.
In this equation, the ellipticity ratio \eqref{ratio} may blow up when the gradient $|Du|$ is large. Thus, Lipschitz regularity yields a bounded ellipticity ratio and places the equation
in an effectively uniformly elliptic regime.

Some examples of equations satisfying the assumptions are:
\begin{enumerate}
    \item A double phase problem. 
    \begin{align} \label{dp}
        |Du|^p F(D^2u) + a(x)|Du|^q G(D^2u)= f(x),
    \end{align}
    where $F,G$ is uniformly elliptic and $a(\cdot) \in C^\alpha$ with $a(\cdot)\geq0$.
    \item An anisotropic $(p,q)$-growth problem. 
    \begin{align} \label{pq}
      |Du|^p F(D^2u) + \sum_i a_i(x)|D_iu|^{q_i} G_i(D^2u) =f(x),
    \end{align}
    where $q_i\geq 0$, $p\leq q_i$, $q=\max q_i$, $a_i(\cdot) \in C^\alpha$ with $a_i(\cdot)\geq0$, $F$ is uniformly elliptic, and that $G_i$ is degenerate elliptic in the sense that
    \begin{align*}
        0\leq G_i(M+N)-G_i(M) \leq \Lambda\norm{N}
    \end{align*}
    for any $M,N \in S(n)$ with $N\geq 0 $.
\end{enumerate}
The first example is fully nonlinear analogue of the double phase problem
\begin{align} \label{dpc}
    w\rightarrow \int_\Omega |Dw|^p + a(x)|Dw|^q dx,
\end{align}
where $1<p\leq q$ and $a(\cdot) \in C^\alpha$ with $a(\cdot)\geq0$.
The second example is nondivergent counterpart of the following anisotropic problem
\begin{align}
    w\rightarrow \int_\Omega |Dw|^p + \sum _{i} a_i(x)|D_iw|^{q_i} dx.
\end{align}

Equations of this type are said to satisfy a $(p,q)$-growth condition.
From a variational point of view, the regularity of elliptic equations with $(p,q)$-growth condition has been intensively studied since Marcellini's pioneering works \cite{Marcellini89, Marcellini91}. The $(p,q)$-growth problem is about minimizers of the functional
\begin{align} \label{pqproblem}
     w\rightarrow \int_\Omega G(x,Dw) dx, \quad |z|^p \lesssim G(x,z) \lesssim |z|^q+1
\end{align}
with $1<p\le q$.
Then the problem is nonuniformly elliptic since the ellipticity ratio of $G$, which is given by
\begin{align*}
    \mathcal{R}_G(z) := \frac{\text{highest eigenvalue of } \partial_{zz}G(z)}{\text{lowest eigenvalue of } \partial_{zz}G(z)},
\end{align*}
behaves with the polynomial growth as
\begin{align*}
    \mathcal{R}_G(z) \lesssim 1+|z|^{q-p},
\end{align*}
causing the ratio to blow up when $z \rightarrow \infty$.
When the difference between $p$ and $q$ is large, irregular behavior
may occur in the variational setting; see \cite{Giaquinta87,Marcellini91}.

A particularly influential case of $(p,q)$-growth problem is the double phase problem \eqref{dpc} first introduced by Zhikov \cite{Zhikov95, Zhikov97}.
Regularity theory for the double phase problem \eqref{dpc} shows that the admissible gap between $p$ and $q$ depends on the H\"older continuity of $a(\cdot) \in C^\alpha$ and on the \emph{a priori} regularity of minimizers. In particular, sharp gap bounds are known in:
\begin{empheq}[left={\empheqlbrace}]{align}
    \frac{q}{p} \leq 1+\frac{\alpha}{n} \qquad &\text{ if } u \in W^{1,p} \text{ in \cite{Mingione15}} , \label{gap1}  \\
    q-p \leq \alpha \qquad &\text{ if } u \in L^\infty \cap W^{1,p} \text{ in \cite{Mingione152}}, \label{gap2} \\
    q-p < \frac{\alpha}{1-\beta} \qquad &\text{ if } u \in C^\beta \cap W^{1,p} \text{ in \cite{Mingione18}}. \label{gap3}
\end{empheq}
If these bounds are violated, then irregular counterexamples may occur; see \cite{Mingione04, Balci20}.
Recently, these results are extended to cases with orlicz growth conditions, as in \cite{Sumiya25,Ok22,Oh20}.
For the general $(p,q)$-growth problem \eqref{pqproblem}, De Filippis and Mingione \cite{Mingione23, Mingione24} proved the nonuniformly elliptic Schauder regularity results for $u\in W^{1,p}$ under the same threshold as \eqref{gap1}.
See \cite{Adimurthi22,Choe92,Giovanni14, Mingione21} for more results in variational setting.

In contrast to the variational theory, Lipschitz regularity for nonuniformly elliptic non-divergence equations has not yet been investigated to the same extent.
An example of a different class of fully nonlinear equations related to double phase growth is the product-type model introduced by De Filippis \cite{DeFilippis21}:
\begin{align} \label{dpx}
    (|Du|^p +a(x)|Du|^q)F(D^2u) = f(x),
\end{align}
where $F$ is uniformly elliptic and $a(x) \geq 0$ is continuous.
In this setting, the regularity mechanism is fundamentally different from that of the present paper:
thanks to the product structure, solutions of \eqref{dpx} enjoy $C^{1,\alpha}$-regularity for some $\alpha>0$, independently of the gap between $p$ and $q$, without requiring
H\"older regularity of $a(x)$.
When $f\equiv0$, the cutting lemma of Imbert-Silvestre \cite{Imbert13} allows one to remove the `gradient' term and reduce to the uniformly elliptic equation $F(D^2u)=0$, yielding $C^{1,\alpha}$ regularity.
Moreover, a solution satisfies $F(D^2u) = \frac{f}{|Du|^p +a(x)|Du|^q} \in L^\infty$ when $|Du|>1$, which is uniformly elliptic, and so we may use the Ishii-Lions method or \cite{Imbert16,Mooney15} to get the regularity results; see more results in \cite{DaSilva22,Baasandorj24,Andrade22} and the references therein.
In contrast, our model \eqref{dp} is not reducible to a product structure when $F\neq G$, and the gap conditions on $p$ and $q$ become essential.

We also would like to mention the result of the interior Lipschitz regularity of viscosity solutions of anisotropic $\vec{p}$-laplacian equation by by Demengel \cite{Demengel17}
\begin{align*}
    \sum_{i} a_i(x)\partial_i(|\partial_iu|^{p_i-2}\partial_i u) =f.
\end{align*}
This equation is nonuniformly elliptic and the admissible gap condition $q-p<\alpha$ with $p=\min p_i$ and $q=\max p_i$ is comparable to that in the present work.

In this paper we establish interior and global Lipschitz bounds for viscosity solutions of $(p,q)$-growth problem \eqref{PDE} under some proper gap conditions analogous to \eqref{gap2}-\eqref{gap3}. We now state the main assumptions on \eqref{PDE} as follows.
For any $x,y \in \Omega$, $z,w \in \mathbb{R}^n$ with $|z|,|w| \geq C_0$ for some constant $C_0 > 0$, and $M,N \in S(n)$, we assume
\begin{description}
    \item[(A1)\label{A1}] $F$ is of $(p,q)$-growth for some $-1< p\leq q$ in the sense that there holds
    \begin{align*}
        \mathcal{M}^-_{\lambda|z|^p,\Lambda(|z|^p + |z|^q)}(N) \leq F(M+N,z,x) - F(M,z,x) \leq \mathcal{M}^+_{\lambda|z|^p,\Lambda(|z|^p + |z|^q)}(N)
    \end{align*}
    with some constants $0<\lambda< \Lambda$.
    \item[(A2)\label{A2}] $F$ is Lipschitz with respect to the gradient variable $z$ in the sense that
    \begin{align*}
    |F(M,z,x) - F(M,w,x)| \leq \Lambda|z-w|(|z|^{q-1}+|w|^{q-1})(\norm{M}+1),
    \end{align*} 
    for any $ \frac{1}{2}|w|\leq |z| \leq 2|w|$.
    \item[(A3)\label{A3}] $F$ is $\alpha$-H{\"o}lder in $x$-dependence in the sense that
    \begin{align*}
    |F(M,z,x) - F(M,z,y)| \leq \Lambda|x-y|^\alpha|z|^{q}(\norm{M}+1).
    \end{align*}
    \item[(A4)\label{A4}] $F(0,z,x) =0$ and $f \in C(\Omega \times \mathbb{R}^n)$ with
    \begin{align*}
        |f(x,z)| \leq C_f(1+|z|^{q+1}).
    \end{align*}
    for some constant $C_f > 0$.
\end{description}
Observe that \eqref{pqgrowth} is equivalent to the assumption \ref{A1}.
In particular,  \ref{A1} implies the ellipticity ratio is
\begin{align} \label{ratio}
    \mathcal{R}_F(Du) = \frac{\Lambda(|Du|^p +|Du|^q)}{\lambda|Du|^p} = \frac{\Lambda}{\lambda}(1+|Du|^{q-p}),
\end{align}
and hence it may blow up when $|Du|\to\infty$.
If the Lipschitz regularity is proved, then the ellipticity ratio does not blow up and the equation can be considered as the standard $p$-growth problem.

Note that the assumption $F(0,p,x)=0$ does not impose new restriction, as \eqref{PDE} can be written as 
\begin{align*}
    \tilde{F}(D^2u,Du,x):= F(D^2u,Du,x)-F(0,Du,x) =f(x,Du)-F(0,Du,x)=:\tilde{f}(x,Du),
\end{align*}
and then $\tilde{F}$ and $\tilde{f}$ satisfy the same structural assumptions.

We state the main theorem below.
For the interior case, we have the following results.
\begin{thm} \label{Main1}
    Let $u \in C(B_1)$ be a viscosity solution of the $(p,q)$-growth problem
    \begin{align*}
        F(D^2u,Du,x) = f(x,Du) \quad \text{in } \ B_1
    \end{align*}
    under the assumptions \ref{A1}-\ref{A4}. If
    \begin{align} \label{gapl}
        q-p< \alpha,
    \end{align}
    then $u \in \Lip(B_{1/2})$ with
    \begin{align*}
        \norm{u}_{Lip(B_{1/2})} \leq C(\norm{u}_{L^\infty(B_1)}+1)^\theta,
    \end{align*}
    where $\theta = \theta(q-p,\alpha)\geq1$ and $C=C(n,\lambda,\Lambda,p,q,\alpha,C_0,C_f)>0$.\\
    Moreover, if $u \in C^\beta(B_1)$ a priori and
    \begin{align} \label{gapbl}
       q-p< \min \left\{1 + \frac{\beta}{2(1-\beta)}, \frac{\alpha}{1-\beta} \right\},
    \end{align}
    then 
    $u \in \Lip(B_{1/2})$ with
    \begin{align*}
        \norm{u}_{Lip(B_{1/2})} \leq C(\norm{u}_{C^\beta(B_1)}+1)^\theta,
    \end{align*}
    where $\theta = \theta(q-p,\alpha,\beta)\geq1$ and  $C=C(n,\lambda,\Lambda,p,q,\alpha,\beta,C_0,C_f)>0$.
\end{thm}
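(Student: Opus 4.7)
The strategy is the Ishii--Lions doubling variables method, tuned to the $(p,q)$-growth. Fix $x_0\in B_{1/2}$, and for parameters $L, M >0$ to be chosen introduce
\[
\Phi(x,y):=u(x)-u(y)-L\,\phi(|x-y|)-\tfrac{M}{2}\bigl(|x-x_0|^2+|y-x_0|^2\bigr)
\]
on $\overline{B_{3/4}(x_0)}\times\overline{B_{3/4}(x_0)}$, where $\phi\in C^2$ is concave with $\phi(0)=0$, $\phi'(0)=1$, and $\phi''<0$ near zero; a standard choice is $\phi(t)=t-\phi_0\,t^{1+\tau}/(1+\tau)$ (suitably cut off) so that $|\phi''(r)|\simeq \tau\phi_0\,r^{\tau-1}$. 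One takes $M$ comparable to $\|u\|_{L^\infty(B_1)}+1$ in the first case, and to $\|u\|_{C^\beta(B_1)}+1$ in the second, so that $\sup \Phi$ is attained in the interior. The aim is to show $\sup\Phi\le 0$ for $L$ equal to a suitable power of the ambient norm, which yields the Lipschitz bound by a standard argument.

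Assume by contradiction that $\sup\Phi>0$ is attained at some $(\bar x,\bar y)$, and write $r:=|\bar x-\bar y|>0$, $\hat v:=(\bar x-\bar y)/r$. The Crandall--Ishii lemma produces $p_1, p_2\in\mathbb{R}^n$ and $X,Y\in S(n)$ representing the relevant sub- and super-jets of $u$ at $\bar x, \bar y$. A standard computation gives $p_i = L\phi'(r)\hat v + O(M)$, so $|p_i|\simeq L$; from $\Phi(\bar x,\bar y)\ge 0$ we have $M|\bar x-x_0|^2\lesssim u(\bar x)-u(\bar y)$, hence
\[
|p_1-p_2|\le M(|\bar x-x_0|+|\bar y-x_0|)\lesssim \sqrt{M\,(u(\bar x)-u(\bar y))},
\]
which sharpens in the Hölder case to $|p_1-p_2|\lesssim\sqrt{M\|u\|_{C^\beta}}\,r^{\beta/2}$. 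The concavity of $\phi$ gives that $X-Y$ has an eigenvalue $\lesssim 4L\phi''(r)+2M$ (very negative) in direction $\hat v$ while all its other eigenvalues are $\le 2M$, and $\|X\|,\|Y\|\lesssim L/r+M$. Subtracting the sub- and supersolution viscosity inequalities, and using \ref{A4} for the source, \ref{A2} and \ref{A3} to swap $p_1\leftrightarrow p_2$ and $\bar x\leftrightarrow\bar y$, and \ref{A1} via monotonicity of the Pucci operator applied to the upper bound of $X-Y$, one obtains
\[
c_0\lambda L^{p+1}|\phi''(r)|\;\lesssim\; M\Lambda L^q+\Lambda|p_1-p_2|\,L^{q-1}\bigl(\tfrac{L}{r}+1\bigr)+\Lambda\,r^\alpha L^q\bigl(\tfrac{L}{r}+1\bigr)+C_f(1+L^{q+1}).
\]

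The maximum property $L\phi(r)\le u(\bar x)-u(\bar y)$ yields $r\lesssim(\|u\|_{L^\infty}+1)/L$ in the first case and $r\lesssim(\|u\|_{C^\beta}+1)^{1/(1-\beta)}L^{-1/(1-\beta)}$ in the second. Plugging in and balancing each bad term against the good $\lambda L^{p+1}\tau\phi_0\, r^{\tau-1}$ (choosing $\tau$ small enough that the ratios are monotone in $r$ on the allowed range) one finds: in the first case the \ref{A3}-error $\Lambda L^{q+1}r^{\alpha-1}$ is the binding constraint and forces $q-p<\alpha-\tau$, yielding \eqref{gapl} as $\tau\to 0$; in the Hölder case the \ref{A3}-balance gives $q-p<(\alpha-\tau)/(1-\beta)$, while the \ref{A2}-error, now of size $\Lambda\sqrt{M\|u\|_{C^\beta}}\,r^{\beta/2}L^{q-1}\cdot L/r\simeq L^q r^{\beta/2-1}$ thanks to the improved $|p_1-p_2|$ estimate, balances to $q-p<1+(\beta/2-\tau)/(1-\beta)$, matching \eqref{gapbl} as $\tau\to 0$. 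Under the gap hypotheses, choosing $L$ to be a suitable power of the ambient norm makes the derived inequality untenable, contradicting $\sup\Phi>0$.

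The principal obstacle is the $(p,q)$-ellipticity itself: it places $|p|^q$ on the positive-eigenvalue side of the Pucci bound, so the \ref{A2} and \ref{A3} errors each carry a factor of order $L^q\cdot L/r$ that fights the good term $\lambda L^{p+1}|\phi''(r)|$. For merely continuous $u$ the quadratic localization handles this and produces the sharp $q-p<\alpha$; the improved $|p_1-p_2|\lesssim r^{\beta/2}$ bound (combining the quadratic penalty $M|\cdot-x_0|^2$ with the Hölder control of $u$) is the key new ingredient that rescues the \ref{A2}-balance in the Hölder case and produces the second, novel constraint $q-p<1+\beta/[2(1-\beta)]$ in \eqref{gapbl}.
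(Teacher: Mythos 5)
Your overall strategy (Ishii--Lions doubling with quadratic localization, the gain $|p_1-p_2|\lesssim r^{\beta/2}$ obtained by combining the quadratic penalty with the H\"older bound on $u$, and the term-by-term balancing) is the same as the paper's, and your H\"older-case balancing reproduces exactly the two constraints in \eqref{gapbl}. However, your single-step execution with the Lipschitz-type test function $\phi(t)=t-\phi_0 t^{1+\tau}/(1+\tau)$ does not close in the $u\in L^\infty$ case, i.e.\ the first half of the theorem. There the localization only gives $|p_1-p_2|\lesssim\sqrt{M\norm{u}_{L^\infty}}=O(1)$, with no positive power of $r$ attached, so the \ref{A2}-error is of size $\Lambda L^{q-1}\norm{Y}\simeq L^{q}/r$, while your good term is $\lambda L^{p+1}|\phi''(r)|\simeq\tau\phi_0\lambda L^{p+1}r^{\tau-1}$. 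The bad-to-good ratio is $\simeq L^{q-p-1}r^{-\tau}/(\tau\phi_0)$, which blows up as $r\to0$ for every fixed $\tau>0$ and every fixed $L$: if the maximum of $\Phi$ happens to be attained at a pair with very small $|\bar x-\bar y|$, the viscosity inequality you derive is not contradictory and the argument stops. Your parenthetical ``choosing $\tau$ small enough that the ratios are monotone on the allowed range'' cannot repair this: for $\beta=0$ that ratio is monotone in the wrong direction for every $\tau>0$. (The $I_3$/\ref{A3}-term survives only because it carries an explicit $r^\alpha$; the \ref{A2}-term has nothing analogous when $\beta=0$.)

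The ingredient missing from your plan is the two-step bootstrap used in the paper. Step one runs the same doubling with the concave power $\phi(t)=t^\kappa/\kappa$, $\kappa<1$, for which the good term $\sim L\,r^{\kappa-2}$ has exactly the same $r$-singularity as $\norm{X},\norm{Y}\lesssim L\,r^{\kappa-2}$; the bad-to-good ratio for the \ref{A2}-term then reduces to $r^{\beta/2}|z|^{\gamma-1}$ with $|z|=Lr^{\kappa-1}$ and $\gamma=q-p$, which stays controlled as $r\to0$ because $\gamma<\alpha\le1$ in the $L^\infty$ case (and because $\kappa$ may be taken close to $1$ in the H\"older case, which is where the constraint $\gamma<1+\tfrac{\beta}{2(1-\beta)}$ actually arises). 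This gives $u\in C^\kappa$ for every $\kappa<1$. Step two is then your Lipschitz-type argument, but with the improved modulus: the maximum point now satisfies $r\lesssim L^{-1/(1-\kappa)}$ and $|\bar x-x_0|\lesssim r^{\kappa/2}$ with $\kappa$ arbitrarily close to $1$, so the problematic ratio becomes $r^{\kappa/2-\tau}L^{\gamma-1}$ with a positive exponent of $r$, and every bad term is maximized at $r\simeq L^{-1/(1-\kappa)}$ and killed by taking $L$ large. As written, your argument establishes only the $u\in C^\beta$, $\beta>0$, half of the statement; you need to insert the intermediate $C^\kappa$ step (or supply some other source of smallness in $r$ for the \ref{A2}-error) to obtain the $L^\infty$ case.
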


We also prove global Lipschitz regularity results up to the boundary.
\begin{thm} \label{Main2}
    Let $\Omega \subset \mathbb{R}^n$ be a bounded $C^{1,1}$ domain.
    Let $u \in C(\overline{\Omega})$ be a viscosity solution of the $(p,q)$-growth problem
    \begin{align*}
    \begin{cases}
        F(D^2u,Du,x) = f(x,Du) \quad &\text{in } \ \Omega \\
        u=g\quad &\text{on } \ \partial\Omega
    \end{cases}
    \end{align*}
    under the assumptions \ref{A1}, \ref{A3} and \ref{A4} with $g\in C^{1,1}(\partial\Omega)$. If
    \begin{align} \label{gapg}
        q-p< \alpha,
    \end{align}
   then $u \in \Lip(\overline{\Omega})$ with
   \begin{align*}
    \norm{u}_{Lip(\overline{\Omega})} \leq C(\norm{u}_{L^\infty(\overline{\Omega})}+1)^\theta,
   \end{align*}
   where $\theta = \theta(q-p,\alpha)\geq1$ and $C=C(n,\lambda,\Lambda,p,q,\alpha,C_0 ,C_f,\norm{g}_{C^{1,1}(\partial\Omega)}, \Omega)>0$.\\
   Moreover, if $u \in C^\beta(\overline{\Omega})$ a priori and
    \begin{align} \label{gapbg}
       q-p< \frac{\alpha}{1-\beta},
    \end{align}
    then $u \in \Lip(\overline{\Omega})$ with
   \begin{align*}
    \norm{u}_{Lip(\overline{\Omega})} \leq C(\norm{u}_{C^\beta(\overline{\Omega})}+1)^\theta,
   \end{align*}
   where $\theta = \theta(q-p,\alpha,\beta)\geq1$ and $C=C(n,\lambda,\Lambda,p,q,\alpha,\beta,C_0,C_f ,\norm{g}_{C^{1,1}(\partial\Omega)}, \Omega)>0$.
\end{thm}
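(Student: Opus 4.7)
The plan is to prove Theorem \ref{Main2} by a direct global argument: a boundary Lipschitz estimate obtained via explicit barriers that exploit the $C^{1,1}$ regularity of $\partial\Omega$ and $g$, combined with an interior-type Lipschitz argument adapted from the techniques underlying Theorem \ref{Main1}. Because assumption \ref{A2} is absent from the hypotheses, the argument should be organized so that only \ref{A1}, \ref{A3}, and \ref{A4} are invoked in the comparisons that arise.

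As a first reduction, extend $g$ to a $C^{1,1}(\overline\Omega)$ function $\tilde g$ (possible by the $C^{1,1}$ regularity of $\partial\Omega$ and $g$) and pass to $v := u - \tilde g$. Then $v$ solves a $(p,q)$-growth equation of the form \eqref{PDE} --- the modified right-hand side still obeys \ref{A4} --- with zero boundary values. It suffices to prove $|v(x)| \leq C\,d(x)$ on the collar $\Omega_\rho := \{0 < d < \rho\}$, where $d = \mathrm{dist}(\cdot, \partial\Omega) \in C^{1,1}(\Omega_\rho)$ for sufficiently small $\rho$ depending only on $\Omega$.

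For the boundary barrier, I set $\phi^\pm(x) = \pm A\,d(x) \pm B\,d(x)^2$ with $A,B>0$ to be selected. Since $D^2 d\cdot Dd = 0$ and the tangential principal curvatures of the level sets of $d$ are bounded, $D^2\phi^+$ has one eigenvalue approximately $2B$ in the direction of $Dd$ and the remaining eigenvalues of size $O(A+Bd)$. The Pucci lower bound from \ref{A1} combined with \ref{A4} yields
\begin{align*}
F(D^2\phi^+, D\phi^+, x) \geq 2\lambda(A + 2Bd)^p B - C(A + 2Bd)^{q+1},
\end{align*}
which dominates $|f(x, D\phi^+)| \leq C(1 + |D\phi^+|^{q+1})$ provided $B \gtrsim A^{q-p+1}$ and $A^{q-p}\rho \lesssim 1$ (so that $|D\phi^+|\simeq A$ on $\Omega_\rho$). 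Choosing $A$ so that $\phi^+ \geq v$ on the inner boundary $\{d = \rho\}$ --- namely $A\rho \gtrsim \|u\|_{L^\infty}+1$ in the first case, or $A \simeq [u]_{C^\beta}\rho^{\beta-1}$ in the $C^\beta$ case via the refined trace bound $\|v\|_{L^\infty(\{d=\rho\})}\lesssim [u]_{C^\beta}\rho^\beta$ --- the comparison principle on $\Omega_\rho$ yields $|v(x)| \leq (A + B\rho)\,d(x)$. Optimizing $\rho$ against the admissibility constraint and feeding in the $x$-H\"older structure \ref{A3} through a boundary rescaling produces the desired bounds exactly under the sharp gap conditions \eqref{gapg} and \eqref{gapbg}.

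To promote the boundary estimate to a full global Lipschitz bound, I adapt the interior doubling-variables argument behind Theorem \ref{Main1} to the global setting: one considers an Ishii-Lions test function of the form $\Phi(x,y) = u(x) - u(y) - L|x-y|^\gamma - (\text{penalty})$ on $\overline\Omega \times \overline\Omega$, derives a contradiction for large $L$ at interior maximizers using \ref{A1}, \ref{A3}, \ref{A4}, and at boundary maximizers uses the Lipschitz bound from the previous step. The hard part will be the barrier analysis: the simultaneous balance of $A$, $B$, and $\rho$ against the admissibility constraint $A^{q-p}\rho \lesssim 1$ and the inner-boundary trace bound is what produces the sharp gap conditions, and the most delicate step is propagating the $\alpha$-H\"older regularity \ref{A3} of $F$ in $x$ through the boundary rescaling that appears when $[u]_{C^\beta}$ becomes the driving quantity.
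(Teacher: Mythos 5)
Your overall architecture --- a barrier estimate in a collar $\{0<d<\rho\}$ giving $|u-g|\lesssim d$, followed by a global doubling-of-variables argument --- is the same as the paper's (Lemma \ref{bar} plus the global Ishii--Lions computation \eqref{gloish}). However, there are two genuine problems with the way you have set it up.

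First, the barrier as written does not close. You take $\phi^+ = A\,d + B\,d^2$, which is \emph{convex} in the normal direction, and you derive a \emph{lower} bound $F(D^2\phi^+,D\phi^+,x)\geq 2\lambda(A+2Bd)^pB - C(\cdots)^{q+1} \geq |f|$. That makes $\phi^+$ a strict \emph{subsolution}; comparison with the supersolution $v$ can then only give $\phi^+\leq v$ where the boundary ordering permits, which is the wrong inequality (and in fact fails on $\{d=\rho\}$ since $\phi^+>0$ there). To bound $v$ from above you need an upper barrier that is a strict \emph{supersolution}, i.e. one that is concave in $d$ so that $\mathcal{M}^+_{\lambda|z|^p,\Lambda(|z|^p+|z|^q)}(D^2\phi^+)$ is large and negative; this is why the paper uses $\overline{w}\sim \delta^{\beta-1}\,d/(1+d^{\gamma_0})$ in \eqref{barw}, whose Hessian has the eigenvalue $-c\,\delta^{\beta-1}d^{\gamma_0-1}$ in the $Dd\otimes Dd$ direction. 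Your quantitative bookkeeping ($B\gtrsim A^{q-p+1}$, $A^{q-p}\rho\lesssim 1$, $A\simeq\rho^{\beta-1}$) is essentially the right balance once the sign is flipped to $A\,d-B\,d^2$, but as stated the comparison runs backwards.

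Second, you attribute the sharp gap conditions \eqref{gapg} and \eqref{gapbg} to the barrier step (``feeding in the $x$-H\"older structure \ref{A3} through a boundary rescaling''). This is not where they come from: the barrier computation never invokes \ref{A3} and only requires $q-p<\frac{1}{1-\beta}$ (which is implied by \eqref{gapbg} since $\alpha\leq1$). The conditions involving $\alpha$ arise in the doubling argument, from balancing the term $F(Y,z,x)-F(Y,z,y)\leq \Lambda\delta^{\alpha}|z|^{q-p}\norm{Y}$ against the good term $-CL\delta^{\kappa-2}$ with $|z|=L\delta^{\kappa-1}$ and $\delta\lesssim L^{-1/(\kappa-\beta)}$. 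Relatedly, your unspecified ``$(\text{penalty})$'' in the test function $\Phi(x,y)$ is a trap: any localization term depending on $x$ and $y$ separately forces the two gradients $z_x\neq z_y$ apart, reintroduces the difference-of-gradient term $I_2$ (hence assumption \ref{A2}, which Theorem \ref{Main2} does not assume), and degrades the gap to the interior condition \eqref{gapbl}. Since $\Omega$ is bounded, the correct global test function is simply $u(x)-u(y)-\frac{L}{\kappa}|x-y|^{\kappa}$ with no penalty, which is precisely what allows the optimal bound $q-p<\frac{\alpha}{1-\beta}$. You should make both of these points explicit for the argument to be complete.
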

\begin{rmk}
We give some remarks on the above theorems.
\begin{enumerate}
\item Assumption \ref{A2} is used only in the \emph{interior} Ishii--Lions argument, where localization terms  $\tfrac{L_2}{2}|x-x_0|^2 $ and $\tfrac{L_2}{2}|y-x_0|^2 $ (see \eqref{intish}) generate a difference between the gradients at two points and one must control the corresponding term in the equation. In the \emph{global} argument (see \eqref{gloish}), such localization terms are absent, and therefore the gradient-difference term does not appear; for this reason \ref{A2} is not required in Theorem~\ref{Main2}.
\item For $u\in L^\infty$, the interior gap \eqref{gapl} and the global gap \eqref{gapg} both match the optimal gap for variational problem \eqref{gap2}.
For $u\in C^\beta$, the global bound \eqref{gapbg} matches \eqref{gap3}, while the interior bound \eqref{gapbl} is slightly more restrictive; this loss is due to the localization terms in the interior Ishii--Lions method.
 \item The two gaps, \eqref{gapbl} and \eqref{gapbg}, for $u \in C^\beta$ can be interpreted as interpolative bounds. 
 Observe that for both bounds, if $\beta \rightarrow 0$, then they reduce to $q-p<\alpha$, while if $\beta \rightarrow 1$, then neither of them is necessary.  
 \item While irregular counterexamples beyond the optimal gap are well known in the divergence-form setting \cite{Mingione04, Balci20}, to the best of our knowledge no analogous irregular counterexamples are currently available for fully nonlinear non-divergence equations with $(p,q)$-growth.
\end{enumerate}
\end{rmk}

Now we explain our approach to the proof of the main theorems.
First, by dividing $|Du|^{p}$ by the $(p,q)$-growth problem \eqref{PDE}, we can simply change the problem into a $(0,\gamma)$-growth problem where $\gamma=q-p$, but it does not render the equation uniformly elliptic because the ellipticity ratio still depends on $|Du|$.
The core of our proof is the Ishii-Lions method \cite{Ishii92,Ishii90}, first yielding $C^\kappa$ bounds for every $\kappa\in(0,1)$ and then proving $u \in Lip$.
For boundary and global estimates, we combine a barrier construction (following Birindelli--Demengel \cite{Birindelli14}) with the global Ishii--Lions method.

The paper is organized as follows.
In Section \ref{sec2} we introduce the notations and some preliminaries.
In Section \ref{sec3} we prove the interior Lipschitz regularity, Theorem \ref{Main1}.
In Section \ref{sec4} we prove the boundary and global Lipschitz regularity, Theorem \ref{Main2}. 

\section{Notations and Preliminaries} \label{sec2}

Throughout the paper, we write $B_r(x_0) = \{ x \in \mathbb{R}^n : |x-x_0| <r \}$ and $B_r = B_r(0)$.
$S(n)$ denotes the space of symmetric $n \times n$ real matrices and $I$ denotes the identity matrix.
For $a,b>0$, $a\approx b$ means there exists a universal $C>1$ such that $\frac{1}{C}b\leq a \leq Cb$.

Now we recall the definition of the inequalities \eqref{PDE} in the viscosity sense from \cite{Ishii92, Caffarelli95} as follows.
\begin{defn}
Let $f \in C(B_1)$.
We say that $u \in C(\overline{B}_1)$ satisfies 
$$F(D^2u,Du,x) \leq f(x,Du) \quad\text{in}\ B_{1} \quad (\text{resp.} \geq) $$
in the viscosity sense, if for any $x_0 \in B_1$ and test function $\psi \in C^2(B_1)$ such that $u-\psi$ has a local minimum (resp. maximum) at $x_0$, then
$$F(D^2\psi(x_0),D\psi(x_0),x_0)\leq f(x_0,D\psi(x_0)) \quad (\text{resp.} \geq).$$ 
\end{defn}

Also, we state the definition of subjet and superjet introduced in \cite{Ishii92}, which can be used in the definition of viscosity solution.
\begin{defn}
    For any continuous function $u \in C(\Omega)$ and $x \in \Omega$, we define superjet and subjet by
    \begin{align*}
        \mathcal{J}^{2,+}_\Omega u(x) = \left\{ (p,X) \in \mathbb{R}^n \times S(n) : u(x+h) \leq u(x) + \langle p, h \rangle  + \frac{1}{2} \langle Xh, h\rangle +o(h^2), \ \ \forall h \in \mathbb{R}^n \right\}, \\
        \mathcal{J}^{2,-}_\Omega u(x) = \left\{ (p,X) \in \mathbb{R}^n \times S(n) : u(x+h) \geq u(x) + \langle p, h \rangle  + \frac{1}{2} \langle Xh, h\rangle +o(h^2), \ \ \forall h \in \mathbb{R}^n \right\}.
    \end{align*}
    Furthermore, we define the closed superjet and subjet by
    \begin{align*}
        \overline{\mathcal{J}^{2,\pm}_\Omega} u(x) = \left\{ (p,X)  : \exists x_n \in \Omega, \exists (p_n,X_n) \in \mathcal{J}^{2,\pm}_\Omega u(x_n), \ (x_n, u(x_n),p_n,X_n) \rightarrow (x,u(x),p,X)\right\}.
    \end{align*}
\end{defn}
We also recall the definition and some properties of the Pucci operator (see \cite{Caffarelli95}).
\begin{defn}
    For given $0<\lambda \leq \Lambda$, we define the Pucci operators $\mathcal{P}_{\lambda,\Lambda}^{\pm} : S(n) \rightarrow \mathbb{R}$ as follows:
\begin{align*}
    \mathcal{M}_{\lambda,\Lambda}^{+}(M) := \lambda \sum_{e_i(M)<0}e_i(M) + \Lambda \sum_{e_i(M)>0}e_i(M), \\
    \mathcal{M}_{\lambda,\Lambda}^{-}(M) := \Lambda \sum_{e_i(M)<0}e_i(M) + \lambda \sum_{e_i(M)>0}e_i(M),
\end{align*}
where $e_i(M)$'s are the eigenvalues of $M$.
\end{defn}
\begin{prop} 
For any $M,N \in S(n)$, we have
    \begin{enumerate}
        \item For $\lambda' \leq \lambda \leq \Lambda \leq \Lambda'$, $\mathcal{M}_{\lambda,\Lambda}^{+}(M) \leq \mathcal{M}_{\lambda',\Lambda'}^{+}(M)$, $\mathcal{M}_{\lambda,\Lambda}^{-}(M) \geq \mathcal{M}_{\lambda',\Lambda'}^{-}(M)$.\\
        \item For $\alpha >0$, $\mathcal{M}_{\alpha\lambda,\alpha\Lambda}^{\pm}(M) = \alpha\mathcal{M}_{\lambda,\Lambda}^{\pm}(M)$. \\
        \item $ \mathcal{M}_{\lambda,\Lambda}^{-}(M) +\mathcal{M}_{\lambda,\Lambda}^{-}(N)\leq \mathcal{M}_{\lambda,\Lambda}^{-}(M+N) \leq \mathcal{M}_{\lambda,\Lambda}^{-}(M) +\mathcal{M}_{\lambda,\Lambda}^{+}(N)$. \\
        \item $ \mathcal{M}_{\lambda,\Lambda}^{+}(M) +\mathcal{M}_{\lambda,\Lambda}^{-}(N)\leq \mathcal{M}_{\lambda,\Lambda}^{+}(M+N) \leq \mathcal{M}_{\lambda,\Lambda}^{+}(M) +\mathcal{M}_{\lambda,\Lambda}^{+}(N).$ 
    \end{enumerate}
\end{prop}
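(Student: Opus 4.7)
The plan is to derive all four properties from the variational (sup/inf) characterization of the Pucci extremal operators, namely
\begin{align*}
\mathcal{M}^+_{\lambda,\Lambda}(M) \;=\; \sup_{A \in \mathcal{A}_{\lambda,\Lambda}} \tr(AM), \qquad
\mathcal{M}^-_{\lambda,\Lambda}(M) \;=\; \inf_{A \in \mathcal{A}_{\lambda,\Lambda}} \tr(AM),
\end{align*}
where $\mathcal{A}_{\lambda,\Lambda}$ is the set of symmetric $n\times n$ matrices whose eigenvalues lie in $[\lambda,\Lambda]$. First I would establish this equivalence with the eigenvalue-sum definition by simultaneously diagonalizing: writing $M=U\,\mathrm{diag}(e_i(M))\,U^T$, the quantity $\tr(AM)$ becomes $\tr(\tilde A \,\mathrm{diag}(e_i(M)))$ with $\tilde A = U^T A U \in \mathcal{A}_{\lambda,\Lambda}$, so the extremum over $\mathcal{A}_{\lambda,\Lambda}$ is attained at a diagonal $\tilde A$, yielding $\Lambda$ on the positive eigenvalues and $\lambda$ on the negative ones (and symmetrically for the infimum).

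Given this characterization, property (1) is immediate: the inclusion $\mathcal{A}_{\lambda,\Lambda} \subset \mathcal{A}_{\lambda',\Lambda'}$ can only enlarge the supremum and shrink the infimum. Property (2) follows from linearity, since the map $A\mapsto \alpha A$ is a bijection of $\mathcal{A}_{\lambda,\Lambda}$ onto $\mathcal{A}_{\alpha\lambda,\alpha\Lambda}$ and $\tr(\alpha A M)=\alpha\tr(AM)$ for $\alpha>0$.

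For (3) and (4), I would apply the elementary facts
\begin{align*}
\inf_A\bigl(f(A)+g(A)\bigr) \;\geq\; \inf_A f + \inf_A g, \qquad
\inf_A\bigl(f(A)+g(A)\bigr) \;\leq\; \inf_A f + \sup_A g,
\end{align*}
and their suprema counterparts, with $f(A)=\tr(AM)$ and $g(A)=\tr(AN)$. Since $\tr\bigl(A(M+N)\bigr)=f(A)+g(A)$, both chains of inequalities in (3) and (4) drop out directly. For instance, the upper bound in (3) comes from choosing $A_0$ to attain $\mathcal{M}^-_{\lambda,\Lambda}(M)$ and estimating $\inf_A(f+g)\leq f(A_0)+g(A_0)\leq \mathcal{M}^-_{\lambda,\Lambda}(M)+\mathcal{M}^+_{\lambda,\Lambda}(N)$, and the other three inequalities are analogous.

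There is no substantive obstacle in this proposition; the only step requiring even a short argument is verifying the sup/inf characterization, and that is a standard simultaneous-diagonalization exercise. All four statements are classical Pucci-operator identities in the spirit of Caffarelli--Cabr\'e \cite{Caffarelli95}, and the proof is essentially a bookkeeping application of the basic sup/inf inequalities.
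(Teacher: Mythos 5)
Your proof is correct and follows the standard route: the paper itself states no proof of this proposition (it simply cites Caffarelli--Cabr\'e) but does record the characterization $\mathcal{M}^+_{\lambda,\Lambda}(M)=\sup_A A_{ij}M_{ij}$ over symmetric $A$ with eigenvalues in $[\lambda,\Lambda]$, which is exactly the representation you use. All four properties then follow as you describe, so your argument is essentially the same as the intended one.
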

Especially, we have
\begin{align*}
    \mathcal{M}^\pm_{\lambda|z|^p,\Lambda(|z|^p + |z|^q)}(M) = |z|^p\mathcal{M}^\pm_{\lambda,\Lambda(1 + |z|^{q-p})}(M).
\end{align*}
Moreover, we introduce the Ishii-Lions Lemma, (see \cite{Ishii92,Silvestre13}). 
\begin{lem} \label{IL1}
    Let $u,v \in C(\Omega)$ and $\phi(x) \in C^2(\Omega)$.
    Assume that $(\overline{x},\overline{y}) \in \Omega\times\Omega$ is a local maximum points of $u(x)-v(y) -\phi(x-y)$.
    Then for any $\epsilon>0$ such that $\epsilon Z<I$, there exist $X,Y \in S(n)$ such that
    \begin{align*}
        (z,X) \in \overline{\mathcal{J}^{2,+}_\Omega} u(\overline{x}), \quad (z,Y) \in \overline{\mathcal{J}^{2,-}_\Omega} v(\overline{y})
    \end{align*}
    and
    \begin{align*}
        -\frac{2}{\epsilon} \begin{pmatrix}
             I & 0 \\
             0 & I
         \end{pmatrix}
         \leq 
        \begin{pmatrix}
             X & 0 \\
             0 &-Y
         \end{pmatrix}
         \leq 
         \begin{pmatrix}
             Z^\epsilon & -Z^\epsilon \\
             -Z^\epsilon & Z^\epsilon
         \end{pmatrix},
    \end{align*}
    where $z = D\phi(\overline{x}-\overline{y})$, $Z = D^2\phi(\overline{x}-\overline{y})$ and $Z^\epsilon = (I-\epsilon Z)^{-1}Z$.
\end{lem}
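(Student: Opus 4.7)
The plan is to prove this by the sup-convolution/inf-convolution technique of Jensen, as formalized in the Crandall--Ishii ``theorem of sums.'' For $\delta>0$ I would introduce the sup-convolution $u^\delta(x) = \sup_w\{u(x+w) - |w|^2/(2\delta)\}$ and the inf-convolution $v_\delta(y) = \inf_w\{v(y+w) + |w|^2/(2\delta)\}$. These regularizations are semi-convex, resp.\ semi-concave, with constant $1/\delta$; they converge locally uniformly to $u,v$ as $\delta\to 0$; by Alexandrov's theorem they are twice classically differentiable almost everywhere; and a classical second-order expansion at a point $x_0$ of $u^\delta$ transfers to an element of $\mathcal{J}^{2,+}_\Omega u$ at the associated supremum witness, and analogously for $v_\delta$ and $\mathcal{J}^{2,-}_\Omega v$.

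Next I would study the perturbed problem $\Phi(x,y) := u^\delta(x) - v_\delta(y) - \phi(x-y) - \eta(|x-\bar x|^2 + |y-\bar y|^2)$ on a neighborhood of $(\bar x, \bar y)$. For small $\eta>0$ this attains a strict local maximum at some $(x_\delta, y_\delta)$ near $(\bar x, \bar y)$. Because $u^\delta(x) - v_\delta(y)$ is semi-convex in $(x,y)$ and $\phi(x-y)$ is $C^2$, Jensen's variational lemma produces arbitrarily small linear perturbations of $\Phi$ whose maximizer sits at a point where both $u^\delta$ and $v_\delta$ are twice classically differentiable. Reading off first- and second-order optimality at such a point gives pointwise Hessians $X_\delta, Y_\delta$ sharing the gradient $D\phi(x_\delta - y_\delta)$, together with the block inequality
\[
\begin{pmatrix} X_\delta & 0 \\ 0 & -Y_\delta \end{pmatrix}
\;\le\;
\begin{pmatrix} Z_\delta & -Z_\delta \\ -Z_\delta & Z_\delta \end{pmatrix} + O(\eta)\,I,
\qquad Z_\delta := D^2\phi(x_\delta - y_\delta),
\]
while semi-convexity/concavity of $u^\delta, v_\delta$ forces $X_\delta \ge -(1/\delta)I$ and $-Y_\delta \ge -(1/\delta)I$.

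Finally I would send $\eta\to 0$, take closed-jet limits, and couple $\delta$ to $\epsilon$ (essentially $\delta = \epsilon/2$) so that the crude lower bound becomes $-(2/\epsilon)I$. The upgrade of the right-hand side from $Z$ to $Z^\epsilon = (I-\epsilon Z)^{-1}Z$ is a linear-algebra manipulation that relies on the identity $Z^\epsilon - Z = \epsilon(I-\epsilon Z)^{-1}Z^2$ (valid since $\epsilon Z < I$) combined with a block congruence diagonalizing the right-hand block; this twist is precisely the imprint of sup/inf convolution on the Hessian side. Closedness of $\overline{\mathcal{J}^{2,\pm}_\Omega}$ then places the limit pairs in the correct closed jets at $\bar x$ and $\bar y$, and the matrix inequality passes to the limit by continuity. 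The hard part will be the Jensen/Alexandrov step --- producing a single nearby maximizer where both $u^\delta$ and $v_\delta$ are simultaneously twice classically differentiable while keeping the quantitative block inequality intact --- together with the subsequent algebra that converts the raw bound involving $Z$ into the sharper resolvent form $Z^\epsilon$.
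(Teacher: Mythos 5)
This lemma is not proved in the paper at all: it is quoted as a classical result with a pointer to \cite{Ishii92,Silvestre13}, so there is no ``paper proof'' to compare against. Your outline is precisely the standard proof of the Crandall--Ishii theorem on sums from those references: sup-/inf-convolution to gain semiconvexity/semiconcavity with constant $1/\delta$, Jensen's perturbation lemma plus Alexandrov's theorem to land on a nearby maximizer where both regularizations are twice differentiable, second-order optimality in the doubled variables to get the block inequality with $\begin{pmatrix} Z & -Z \\ -Z & Z\end{pmatrix}$ on the right, the one-sided Hessian bound $-(1/\delta)I$ from semiconvexity, the ``magic property'' of convolutions to transfer the jets back to $u$ and $v$, and a limit using closedness of $\overline{\mathcal{J}^{2,\pm}_\Omega}$. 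All of these steps are correct and in the right order. The one place where your sketch is thinner than the actual argument is the origin of the resolvent form $Z^\epsilon=(I-\epsilon Z)^{-1}Z$: it is not obtained by a linear-algebra massage of the already-established inequality with $Z$ (the inequality with $Z$ alone is generally \emph{false} for the jets of $u,v$ themselves), but is generated inside the convolution step --- one must run the maximum-principle computation against the quadratic associated with $(I-\epsilon Z)^{-1}Z$ (equivalently, track the $\epsilon A^2$-type correction in the theorem on sums, with $A$ the full Hessian in $(x,y)$), which is also exactly where the coupling of $\delta$ with $\epsilon$ and the hypothesis $\epsilon Z<I$ are used. You do acknowledge that this twist is ``the imprint of sup/inf convolution,'' so the idea is right; a complete write-up would need to carry out that computation rather than treat it as an afterthought. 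As an outline of the standard proof the proposal is sound.
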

For any $-1<p\leq q$, we show that it is always possible to change the $(p,q)$-growth problem into the $(0,\gamma)$-growth problem where $\gamma=q-p\geq0$.
By dividing $|Du|^{p}$ by \eqref{PDE}, we have
\begin{align*}
    \tilde{F}(D^2u,Du,x):=|Du|^{-p}F(D^2u,Du,x)=|Du|^{-p}f(x,Du) :=\tilde{f}(x,Du)
\end{align*}
Then $\tilde{F}$ has $(0,\gamma)$-growth and satisfies all the assumptions $\ref{A1}-\ref{A4}$.
Notably, for any $|z|,|w| >C_0$ with $\frac{1}{2}|w|\leq |z| \leq 2|w|$,
\begin{align*}
    |\tilde{F}(M,z,x)-\tilde{F}(M,w,x)| &\leq |z|^{-p}|F(M,z,x)-F(M,w,x)| + ||z|^{-p}-|w|^{-p}||F(M,w,x)| \\
    &\leq|z|^{-p} \Lambda |z-w|(|z|^{q-1}+|w|^{q-1})(1+\norm{M}) + C|z-w||z|^{-p-1} |w|^q(1+\norm{M}) \\
    &\lesssim \Lambda|z-w|(|z|^{\gamma-1}+|w|^{\gamma-1})(1+\norm{M}).
\end{align*}
Moreover, we also get
\begin{align*}
    |\tilde{f}(x,z)| \leq C_f(|z|^{-p}+|z|^{q-p+1}) 
    \leq C_f(1+|z|^{\gamma+1})
\end{align*}
even for $-1<p\leq0$ since $-p<1\leq q-p+1$.
Therefore, from now on we always consider the $(0,\gamma)$-growth problem with $\gamma=q-p$.

Finally, we consider the scaling property of the $(0,\gamma)$-growth problem to verify that we can assume that $\norm{u}_{L^\infty} \leq 1$ for the $u \in L^\infty$ case and that $\norm{u}_{C^\beta} \leq 1$ for the $u \in C^\beta$ case with different $\Lambda$ and $C_f$.
If $u \in C(B_1)$ is a solution of the $(0,\gamma)$-growth problem \eqref{PDE}, we consider
$$v(x) := \frac{u(rx)}{K}$$
for some $0<r\leq1\leq K$.
Then $v$ is a solution of 
\begin{align*}
    \tilde{F}(D^2v,Dv,x)=\tilde{f}(x,Du),
\end{align*}
where \begin{align} \label{scaledPDE}
    \tilde{F}(M,z,x) := \frac{r^2}{K}F(\frac{K}{r^2}M,\frac{K}{r}z,rx), \quad \tilde{f}(x,z) :=\frac{r^2}{K}f(rx,\frac{K}{r}z).
\end{align}
Note that $\tilde{F}$ and $\tilde{f}$ satisfy the same assumptions \ref{A1}-\ref{A4} with $\Lambda$ and $C_f$ changed to $\tilde{\Lambda} = \left(\frac{K}{r}\right)^\gamma\Lambda$ and $\tilde{C_f}= \left(\frac{K}{r}\right)^\gamma rC_f$, respectively.
Especially, by setting $r=1$ and $K = (1+ \norm{u}_{L^\infty})$, we have $\norm{v}_{L^\infty} \leq 1$ and $v$ satisfies the $(0,\gamma)$-growth equation with $\tilde{\Lambda} = (1+\norm{u}_{L^\infty})^\gamma \Lambda$ and $\tilde{C_f}= (1+\norm{u}_{L^\infty})^\gamma C_f$.
Similarly, we can assume $\norm{v}_{C^\beta} \leq 1$ by setting $r=1$ and $K = (1+ \norm{u}_{C^\beta})$ with different $\tilde{\Lambda}$ and $\tilde{C_f}$, depending on $\norm{u}_{C^\beta}$.
\section{The interior Lipschitz estimates with $(p,q)$ growth} \label{sec3}
In this section, we first prove Theorem \ref{Main1}. The proof is based on the Ishii-Lions method.
\begin{proof}[Proof of Theorem \ref{Main1}.]
    We prove the theorem either $u$ is $C^\beta$ for some $\beta<1$ or $u$ is just continuous.
    By using the scaling above, we assume $\norm{u}_{L^\infty(B_1)} \leq 1$ or $\norm{u}_{C^\beta(B_1)} \leq 1$  but $\Lambda$ and $C_f$ is depending on the `original' $\norm{u}_{L^\infty(B_1)}$ or $\norm{u}_{C^\beta(B_1)}$. 
    We only prove the $u \in C^\beta$ case since for the latter case, the proof is the same as the one given below the proof below with $\beta = 0$. 
    
    First, we show that $u$ is $C^\kappa$ for any H{\"o}lder exponent $\beta<\kappa<1$, and then prove that $u$ is Lipschitz. We fix $x_0 \in B_{1/2}$ and claim that
    \begin{align} \label{intish}
        M:= \max_{x,y\in \overline{B_1}} \left\{ u(x) - u(y) - L_1\phi(|x-y|) - \frac{L_2}{2}|x-x_0|^2 -\frac{L_2}{2}|y-x_0|^2 \right\} \leq 0,
    \end{align}
    where $\phi(t) = t^\kappa/\kappa$ for some $\beta<\kappa<1$ close to 1 and large $L_1,L_2>0$.
    We argue by contradiction by assuming $M>0$ for any large $L_1, L_2>0$.
    From now on, we write $x,y \in \overline{B_1}$ as the points where the maximum $M$ is attained.
    Then we get $x\neq y$ and 
    \begin{align*}
        L_1 \phi(|x-y|) + \frac{L_2}{2}|x-x_0|^2 +\frac{L_2}{2}|y-x_0|^2 \leq |u(x)-u(y)| \leq 2|x-y|^\beta.
    \end{align*}
    We fix large $L_2 > 64$ so that
    \begin{align} \label{xest}
        |x-x_0| < \frac{1}{4}\delta^{\beta/2} \quad \text{ and } \quad  |y-x_0| < \frac{1}{4}\delta^{\beta/2},
    \end{align}
    where $\delta = |x-y| \leq 2$, which implies $x,y \in B_1$.
    Moreover, by using
    \begin{align*}
        L_1 \phi(|x-y|) = \frac{L_1}{\kappa}\delta^\kappa \leq 2\delta^\beta,
    \end{align*}
     we obtain
     \begin{align} \label{dest}
         \delta \leq CL_1^{-\frac{1}{\kappa-\beta}}.
     \end{align}
     Now we apply the Ishii-Lions lemma (Lemma \ref{IL1}) to $u(x)- \frac{L_2}{2}|x-x_0|^2$ and $u(y) + \frac{L_2}{2}|y-x_0|^2$. Then there exist $X,Y \in S(n)$ such that
     \begin{align*}
          (z, X)\in \overline{\mathcal{J}}^{2,+} \left( u(x)- \frac{L_2}{2}|x-x_0|^2\right), \\
          (z, Y)\in \overline{\mathcal{J}}^{2,-} \left( u(y)+ \frac{L_2}{2}|y-x_0|^2\right),
     \end{align*}
     where 
     \begin{align} \label{zest2}
         z= L_1\phi'(|x-y|)\frac{x-y}{|x-y|} = \frac{L_1} {\delta^{1-\kappa}} a \quad \text{and} \quad a= \frac{x-y}{|x-y|}.
     \end{align}
     Thus, we have
     \begin{align*}
         (z_x, X+L_2I)\in \overline{\mathcal{J}}^{2,+} \left( u(x)\right), \\
         (z_y, Y-L_2I)\in \overline{\mathcal{J}}^{2,-} \left( u(y)\right),
     \end{align*}
     where
     \begin{align*}
         z_x = z+L_2(x-x_0) \quad \text{and} \quad z_y = z-L_2(y-x_0).
     \end{align*}
     Note that for large $L_1>0$ depending on $L_2$ and $C_0$, we get
     \begin{align} \label{zxest}
          C_0\leq \frac{1}{2}|z| \leq |z_x|,|z_y| \leq 2|z|.
     \end{align}
     Moreover, for any $\epsilon>0$ such that $\epsilon Z<I$, we can choose $X,Y \in S(n)$ satisfying
     \begin{align} \label{mtx}
     -\frac{2}{\epsilon} 
        \begin{pmatrix}
            I & 0 \\
            0 &I
        \end{pmatrix}
        \leq
        \begin{pmatrix}
             X & 0 \\
             0 &-Y
         \end{pmatrix}
         \leq
         \begin{pmatrix}
             Z^\epsilon & -Z^\epsilon \\
             -Z^\epsilon & Z^\epsilon
         \end{pmatrix},
     \end{align}
     where
     \begin{align*}
         Z &= L_1 \phi''(|x-y|)\frac{x-y}{|x-y|}\otimes\frac{x-y}{|x-y|} + L_1 \frac{\phi'(|x-y|)}{|x-y|} \left(I-\frac{x-y}{|x-y|}\otimes\frac{x-y}{|x-y|}\right) \\
         &=\frac{L_1}{\delta^{2-\kappa}}\left( (\kappa-1)a\otimes a + (I-a\otimes a) \right),
     \end{align*}
     and $Z^\epsilon = (I-\epsilon Z)^{-1} Z$.
     Letting $\epsilon = \frac{\delta^{2-\kappa}}{2L_1}$, we obtain
     \begin{align*}
         Z^\epsilon = L_1\delta^{\kappa-2} \left( \frac{2(\kappa-1)}{3-\kappa}a \otimes a + 2(I-a\otimes a)\right).
     \end{align*}
     Therefore, by \eqref{mtx} we get
     \begin{align} \label{Xest}
         \norm{X}, \norm{Y} \leq 4\frac{L_1}{\delta^{2-\kappa}}.
     \end{align}
     By the definition of viscosity solution, we have
     \begin{align*}
         F(X+L_2I, z_x, x) \geq f(x,z_x) \quad \text{and} \quad F(Y-L_2I, z_y, y) \leq f(y,z_y),
     \end{align*}
     so that
     \begin{align} \label{allest}
     \begin{split}
         -2C_f(1+C|z|^{\gamma+1})\leq f(x,z_x)-f(y,z_y) 
         &\leq F(X+L_2I, z_x, x) -F(Y-L_2I, z_y, y) \\
        &=(F(X+L_2 I ,z_x, x) - F(Y-L_2 I , z_x, x)) \\
        &+ (F(Y-L_2 I , z_x, x) - F(Y-L_2I, z_y, x)) \\
        &+ (F(Y-L_2I, z_y, x) - F(Y-L_2I, z_y, y)) \\
        &=:I_1 + I_2 +I_3.
     \end{split}
     \end{align}
     From now on, we say $C>0$ is a constant depending on $n, p, q, \alpha, \beta, \lambda, \Lambda, C_0,$ and $ C_f$, which may vary from lines to lines.
     We first estimate $I_1$.
     By the assumption \ref{A1}, we have
     \begin{align*}
         I_1 &\leq \mathcal{M}^+_{\lambda, \Lambda(1+|z_x|^{\gamma})}(X-Y+2L_2I) \\
         &\leq \mathcal{M}^+_{\lambda, \Lambda(1+|z_x|^{\gamma} )}(X-Y) +2L_2\mathcal{M}^+_{\lambda, \Lambda(1+|z_x|^{\gamma})}(I).
     \end{align*}
     Applying the inequality \eqref{mtx} to any vector $(b,b)$ with $|b|= 1$, we have $X-Y \leq 0$.
     On the other hand, applying \eqref{mtx} to $(a,-a)$, we have
     \begin{align*}
         \langle(X-Y)a,a\rangle \leq 4\langle Z^\kappa a, a\rangle = -\frac{8L_1}{\delta^{2-\kappa}} \left(\frac{1-\kappa}{3-\kappa}\right) <0.
     \end{align*}
     These two inequalities mean all the eigenvalues of $X-Y$ are non positive and at least one eigenvalue is less than $-\frac{8L_1}{\delta^{2-\kappa}} \left(\frac{1-\kappa}{3-\kappa}\right)$,
     which implies
     \begin{align*}
         \mathcal{M}^+_{\lambda, \Lambda(1+|z_x|^{\gamma})}(X-Y) \leq -C\frac{L_1}{\delta^{2-\kappa}}.
     \end{align*}
     Thus, we derive that
     \begin{align*}
         I_1 \leq -C\frac{L_1}{\delta^{2-\kappa}} + C(1+|z|^\gamma).
     \end{align*}
     Now we estimate $I_2$.
     We use the assumption \ref{A2}, \eqref{xest}, \eqref{zxest} and \eqref{Xest} to obtain
     \begin{align*}
         I_2 &\leq \Lambda|z_x-z_y|(|z_x|^{\gamma-1}+|z_y|^{\gamma-1}) \norm{Y-L_2I} \\
         &\leq C|(x-x_0)+(y-x_0)||z|^{\gamma-1}\norm{Y-L_2I} \\
         &\leq C\delta^{\beta/2}|z|^{\gamma-1}\frac{L_1}{\delta^{2-\kappa}}.
     \end{align*}
     Finally, we estimate $I_3$.
     By the assumption \ref{A3}, \eqref{zxest} and \eqref{Xest}, we have
     \begin{align*}
         I_3 &\leq \Lambda|x-y|^\alpha|z_y|^\gamma \norm{Y-L_2I} \\
         &\leq C\delta^\alpha |z|^\gamma\frac{L_1}{\delta^{2-\kappa}}.
     \end{align*}
     Gathering the previous estimates, we have
     \begin{align*}
         \frac{L_1}{\delta^{2-\kappa}} \leq C\left(|z|^{\gamma+1}+1 + \delta^{\beta/2}|z|^{\gamma-1}\frac{L_1}{\delta^{2-\kappa}} + \delta^\alpha |z|^\gamma\frac{L_1}{\delta^{2-\kappa}}\right),
     \end{align*}
     or equivalently,
     \begin{align*}
         1 \leq C\left(|z|^{\gamma+1} \frac{\delta^{2-\kappa}}{L_1} + \delta^{\beta/2}|z|^{\gamma-1} + \delta^\alpha |z|^{\gamma}\right).
     \end{align*}
     By using \eqref{zest2}, we obtain
     \begin{align*}
         1 \leq C\left( \delta^{2-\kappa -(\gamma+1)(1-\kappa)}L_1^{\gamma} + \frac{\delta^{\beta/2 + (1-\gamma)(1-\kappa)}}{L_1^{1-\gamma}} + \delta^{\alpha - \gamma(1-\kappa)}L_1^\gamma \right).
     \end{align*}
     Notice that when $\kappa <1$ is close enough to 1, the exponents of $\delta$ are positive.
     Thus, using \eqref{dest} we get
     \begin{align*}
         1 \leq C\left( L_1^{-\frac{1}{\kappa-\beta}\left(1-\gamma(1-\beta)\right) }+L_1^{-\frac{1}{\kappa-\beta}\left(\frac{\beta}{2} + (1-\gamma)(1-\beta)\right)} + L_1^{-\frac{1}{\kappa-\beta}(\alpha - \gamma(1-\beta))} \right).
     \end{align*}
     If $\gamma$ satisfies
     \begin{align*}
         \gamma < \frac{1}{1-\beta}, \ \ \gamma < 1+\frac{\beta}{2(1-\beta)}, \ \ \gamma <\frac{\alpha}{1-\beta},
     \end{align*}
     then the exponents of $L_1$ are negative, which makes a contradiction for large $L_1>0$.
     Therefore, $u$ is $C^\kappa$ for any $\kappa<1$ close to 1.
     
     Now we prove $u$ is Lipschitz by using a similar computation.
     As before, we claim that 
     \begin{align*}
        M:= \max_{x,y\in \overline{B_1}} \left\{ u(x) - u(y) - L_1\phi(|x-y|) - \frac{L_2}{2}|x-x_0|^2 -\frac{L_2}{2}|y-x_0|^2 \right\} \leq 0,
    \end{align*}
    where
    \begin{align*}
        \phi(t) = 
        \begin{cases}
            t-\frac{1}{1+\kappa_0}t^{1+\kappa_0} &\text{for } t\in[0,1] \\
            1-\frac{1}{1+\kappa_0} &\text{for } t>1,
        \end{cases}
    \end{align*}
    for some small $0<\kappa_0<1$ to be chosen later.
    We get $x \neq y$ and since $u \in C^\kappa$ for any $\kappa<1$,
    \begin{align*}
        L_1 \phi(|x-y|) + \frac{L_2}{2}|x-x_0|^2 +\frac{L_2}{2}|y-x_0|^2  \leq C\delta^{\kappa},
    \end{align*}
    where $\delta = |x-y|$. Also, fix large enough $L_2>1$ so that 
    \begin{align}
        |x-x_0| < \frac{1}{4}\delta^{\kappa/2} \quad \text{ and } \quad  |y-y_0| < \frac{1}{4}\delta^{\kappa/2}.
    \end{align}
    Moreover, since $L_1\phi(\delta) \leq L_1 \delta \leq C\delta^\kappa,$ we have
    \begin{align} \label{dest2}
        \delta \leq CL_1^{-\frac{1}{1-\kappa}}.
    \end{align}
    Now we again apply the Ishii-Lions lemma (Lemma \ref{IL1}) to $u(x)- \frac{L_2}{2}|x-x_0|^2$ and $u(y) + \frac{L_2}{2}|y-x_0|^2$, to discover that there exist $X,Y \in S(n)$ such that
     \begin{align*}
         (z_x, X+L_2I)\in \overline{\mathcal{J}}^{2,+} \left( u(x)\right), \\
         (z_y, Y-L_2I)\in \overline{\mathcal{J}}^{2,-} \left( u(y)\right),
     \end{align*}
     where
     \begin{align*}
         z_x = z+L_2(x-x_0) \quad \text{and} \quad z_y = z-L_2(y-x_0),
     \end{align*}
    with
     \begin{align} \label{zest}
         z= L_1\phi'(|x-y|)\frac{x-y}{|x-y|} = L_1(1-\delta^{\kappa_0}) a \quad \text{and} \quad a= \frac{x-y}{|x-y|}.
         \end{align}
     Note that for large $L_1$ depending on $L_2$ and $C_0$, then $|z_x|$ and $ |z_y|$ are comparable with $|z|$ and larger than $C_0$.
     Moreover, for any sufficiently small $\epsilon< 1$, we can choose $X,Y \in S(n)$ satisfying
     \begin{align} \label{mtx2}
     -\frac{2}{\epsilon} 
        \begin{pmatrix}
            I & 0 \\
            0 &I
        \end{pmatrix}
        \leq
        \begin{pmatrix}
             X & 0 \\
             0 &-Y
         \end{pmatrix}
         \leq
         \begin{pmatrix}
             Z^\epsilon & -Z^\epsilon \\
             -Z^\epsilon & Z^\epsilon
         \end{pmatrix},
     \end{align}
     where
     \begin{align*}
         Z =\frac{L_1}{\delta}\left( -\kappa_0\delta^{\kappa_0} a\otimes a + (1-\delta^{\kappa_0})(I-a\otimes a) \right),
     \end{align*}
     and $Z^\epsilon = (I-\epsilon Z)^{-1} Z$.
     Letting $\epsilon = \frac{\delta}{2L_1}$, we obtain
     \begin{align*}
         Z^\epsilon = \frac{L_1}{\delta} \left( -\frac{2\kappa_0\delta^{\kappa_0}}{2+\kappa_0\delta^{\kappa_0}}a \otimes a + \frac{2(1-\delta^{\kappa_0})}{1+\delta^{\kappa_0}}(I-a\otimes a)\right).
     \end{align*}
     Therefore, by \eqref{mtx2} we get
     \begin{align} \label{Xest2}
         \norm{X}, \norm{Y} \leq 4\frac{L_1}{\delta}.
     \end{align}
     We use the same computation as in \eqref{allest} and estimate three terms $I_1$, $I_2$ and $I_3$.
     By the inequality \eqref{mtx}, we have $X-Y \leq 0$ and
     \begin{align*}
         \langle(X-Y)a,a\rangle \leq 4\langle Z^\epsilon a, a\rangle \leq -\frac{8\kappa_0}{2+\kappa_0\delta^{\kappa_0}}\frac{L_1}{\delta^{1-\kappa_0}}<0.
     \end{align*}
     Therefore, we get
     \begin{align*}
         \mathcal{M}^+_{\lambda, \Lambda(1+|z_x|^{\gamma} )}(X-Y) \leq -C\frac{L_1}{\delta^{1-\kappa_0}}.
     \end{align*}
     Thus, we derive that
     \begin{align*}
         I_1 &\leq \mathcal{M}^+_{\lambda, \Lambda(1+|z_x|^{\gamma})}(X-Y+2L_2I) \\
         &\leq \mathcal{M}^+_{\lambda, \Lambda(1+|z_x|^{\gamma})}(X-Y) +2L_2\mathcal{M}^+_{\lambda, \Lambda(1+|z_x|^{\gamma})}(I)\\
         &\leq -C\frac{L_1}{\delta^{1-\kappa_0}} + C(1+|z|^\gamma).
     \end{align*}
     For $I_2$, by using \eqref{Xest2} we obtain
     \begin{align*}
         I_2 &\leq \Lambda|z_x-z_y|(|z_x|^{\gamma-1}+|z_y|^{\gamma-1}) \norm{Y-L_2I} \\
         &\leq C|(x-x_0)+(y-x_0)||z|^{\gamma-1}\norm{Y-L_2I} \\
         &\leq C\delta^{\kappa/2}|z|^{\gamma-1}\frac{L_1}{\delta}.
     \end{align*}
     Finally, for $I_3$, we get
     \begin{align*}
         I_3 &\leq \Lambda|x-y|^\alpha|z_y|^\gamma \norm{Y-L_2I} \\
         &\leq C\delta^\alpha |z|^{\gamma}\frac{L_1}{\delta}.
     \end{align*}
     Gathering the previous estimates, we have
     \begin{align*}
         \frac{L_1}{\delta^{1-\kappa_0}} \leq C\left(|z|^{\gamma+1}+1 + \delta^{\kappa/2}|z|^{\gamma-1}\frac{L_1}{\delta} + \delta^\alpha |z|^{\gamma}\frac{L_1}{\delta}\right),
     \end{align*}
          or equivalently,
     \begin{align*}
         1 \leq C\left(|z|^{\gamma+1} \frac{\delta^{1-\kappa_0}}{L_1} + \delta^{\kappa/2-\kappa_0}|z|^{\gamma-1} + \delta^{\alpha-\kappa_0} |z|^{\gamma}\right).
     \end{align*}
     By using \eqref{zest}, we obtain
     \begin{align*}
         1 \leq C\left( \delta^{1-\kappa_0 }{L_1^{\gamma}} + \frac{\delta^{\kappa/2-\kappa_0}}{L_1^{1-\gamma}} + \delta^{\alpha-\kappa_0} L_1^{\gamma} \right).
     \end{align*}
     We choose $\kappa_0<1$ small so that the exponents of $\delta$ are positive.
     Thus, using \eqref{dest2} we get
     \begin{align*}
         1 \leq C\left( L_1^{-\frac{1}{1-\kappa}\left((1-\kappa_0) -\gamma(1-\kappa)\right) }+L_1^{-\frac{1}{1-\kappa}\left(\frac{\kappa}{2} - \kappa_0 + (1-\gamma)(1-\kappa)\right)} + L_1^{-\frac{1}{1-\kappa}(\alpha - \kappa_0 - \gamma(1-\kappa))} \right).
     \end{align*}
     If we choose $\kappa$ close to 1, then the exponents of $L_1$ are negative, which makes a contradiction for large $L_1$.
     Therefore, $u$ is Lipschitz continuous.
\end{proof}
\section{The boundary and global Lipschitz estimates with $(p,q)$ growth} \label{sec4}
In this section, we obtain the boundary and global regularity results for the $C^{1,1}$ domain $\Omega$.
We assume that $0\in \partial\Omega$ and there exists a ball $B=B_R$ and $\phi \in C^{1,1}(\mathbb{R}^{n-1})$ such that $\phi(0)=0$, $D\phi(0) = 0$ and
\begin{align*}
    B\cap \Omega  \subset \{y\in B : y_n>\phi(y')\}, \quad B \cap \partial\Omega = \{ y \in B: y_n=\phi(y')\}.
\end{align*}
For simplicity, we assume that $B=B_1$ is the unit ball in the lemmas below.
\begin{lem} \label{bar}
    Let $d$ be the distance to $\{y_n = \phi(y')\}$, $g\in C^{1,1}(B_1\cap\partial\Omega)$ and $u \in C(B_1\cap\overline{\Omega })$ be a solution of the $(0,\gamma)$-growth problem
    \begin{align*}
        \begin{cases}
        F(D^2u,Du,x) = f(x,Du) \quad &\text{in } \ B_1\cap \Omega \\
        u=g\quad &\text{on } \ B_1\cap \partial \Omega.
        \end{cases}
    \end{align*} 
    If $\norm{u}_{L^\infty(B_1\cap\overline{\Omega})} \leq 1$ and $0\leq\gamma < 1$, then for any $\gamma_0 \in (0, 1-\gamma)$, there exists a small positive constant $\delta_0=\delta_0(n, \lambda, \Lambda, p, q, \gamma_0,C_0,C_f, \norm{g}_{C^{1,1}},\Omega)>0$ such that for any $\delta\leq\delta_0$, there holds
    \begin{align*}
        |u(y',y_n)-g(y')| \leq \frac{8}{\delta}\frac{d(y)}{1+d(y)^{\gamma_0}} \quad \text{in } \ B_{1/2} \cap\Omega \cap \{d(y)<\delta\}.
    \end{align*}
    Moreover, if $\norm{u}_{C^\beta(B_1\cap\overline{\Omega})} \leq 1$ and $0\leq\gamma < \frac{1}{1-\beta}$, then for any $\gamma_0 \in (0, 1-\gamma(1-\beta))$, there exists $\delta_0=\delta_0(n, \lambda, \Lambda, p, q, \beta, \gamma_0, C_0, C_f, \norm{g}_{C^{1,1}},\Omega)>0$ such that for any $\delta\leq \delta_0$, there holds
    \begin{align*}
        |u(y',y_n)-g(y')| \leq \frac{8}{\delta^{1-\beta}}\frac{d(y)}{1+d(y)^{\gamma_0}} \quad \text{in } \ B_{1/2} \cap\Omega \cap \{d(y)<\delta\}.
    \end{align*}
\end{lem}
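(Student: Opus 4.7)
The approach is a barrier-and-comparison argument, following the strategy of Birindelli--Demengel adapted to the $(0,\gamma)$-growth setting already obtained in Section~\ref{sec2}. After reducing to the case where the boundary data has been subtracted off (by working with $v=u-\bar g$ for a $C^{1,1}$ extension $\bar g$ of $g$, which changes $F$ and $f$ only through a bounded Hessian/gradient shift and modifies the constants in \ref{A1}-\ref{A4}), the goal is to construct a classical supersolution $W$ of the equation in $\Omega\cap\{d<\delta\}$ such that $W\geq v$ on the parabolic boundary of the region. The natural barrier is
\[
  W(y)=M\,\psi(d(y)),\qquad \psi(t)=\frac{t}{1+t^{\gamma_0}},
\]
with $M=8/\delta$ in the $L^\infty$ case and $M=8/\delta^{1-\beta}$ in the $C^\beta$ case. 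Note that $d$ is $C^{1,1}$ near $\partial\Omega$ since $\Omega\in C^{1,1}$, $|Dd|=1$, and $\|D^2 d\|\leq C(\Omega)$.

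\textbf{Supersolution check.} From the computations
$DW=M\psi'(d)\,Dd$ and $D^2W=M\psi''(d)\,Dd\otimes Dd+M\psi'(d)\,D^2 d,$
one sees that $D^2W$ has one very negative eigenvalue $M\psi''(d)\sim -\gamma_0(1+\gamma_0)\,M\,d^{\gamma_0-1}$ in the direction $Dd$, and eigenvalues of size $O(M)$ in the remaining $n-1$ directions. Also, for $\delta$ small we have $|DW|\approx M$, so the equation is in the $(0,\gamma)$-growth regime and $|DW|\geq C_0$. Using \ref{A1} and the Pucci calculus,
\[
  F(D^2W,DW,y)\leq \mathcal{M}^+_{\lambda,\Lambda(1+|DW|^\gamma)}(D^2W)\leq \lambda M\psi''(d)+C\Lambda(1+M^\gamma)M,
\]
while $|f(y,DW)|\leq C_f(1+|DW|^{\gamma+1})\lesssim M^{\gamma+1}$. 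Thus $W$ is a classical supersolution provided
\[
  \lambda M|\psi''(d)|\gtrsim M^{\gamma+1},\qquad \text{i.e.,}\qquad |\psi''(d)|\gtrsim M^\gamma.
\]
Since $|\psi''(t)|\gtrsim t^{\gamma_0-1}$ for small $t$, this reduces to $d^{1-\gamma_0}\lesssim M^{-\gamma}$. For $d\leq\delta$ and $M=8/\delta$, this is $\delta^{1-\gamma_0-\gamma}\lesssim 1$, which holds for $\delta\leq \delta_0$ exactly under the gap $\gamma_0<1-\gamma$. For the $C^\beta$ case with $M=8/\delta^{1-\beta}$, the analogous inequality becomes $\delta^{1-\gamma_0-\gamma(1-\beta)}\lesssim 1$, valid for $\gamma_0<1-\gamma(1-\beta)$. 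This is exactly the gap assumed in the lemma in each case.

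\textbf{Comparison on the boundary and conclusion.} It remains to verify $W\geq v$ on the boundary of a suitable region, which we take to be $\mathcal R=B_{3/4}\cap\Omega\cap\{d<\delta\}$. On $B_{3/4}\cap\partial\Omega$ we have $v=0=W$. On $\{d=\delta\}$,
\[
  W=M\psi(\delta)=\frac{8}{1+\delta^{\gamma_0}}\qquad (\text{resp. } 8\delta^\beta/(1+\delta^{\gamma_0}))
\]
which in the $L^\infty$ case gives $W\geq 4\geq \|v\|_\infty$ for $\delta_0$ small (since $\|u\|_\infty\leq 1$ and $\|\bar g\|_\infty\lesssim \|g\|_{C^{1,1}}$), and in the $C^\beta$ case $W\geq 4\delta^\beta\geq \|v\|_{C^\beta}\delta^\beta\geq|v(y)|$ since $v\equiv 0$ on $\partial\Omega$ and $d(y)=\delta$. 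On $\partial B_{3/4}$, an additional localization $C|y-y_0|^2$ (where $y_0$ is the projection of a given point of $B_{1/2}\cap\Omega$ on $\partial\Omega$) may be added to ensure the barrier dominates; this introduces only bounded Hessian contributions and does not spoil the supersolution property. The viscosity comparison principle, which is applicable here because $|DW|\geq c>0$ renders the equation uniformly elliptic on $\mathcal R$, then gives $v\leq W$ in $\mathcal R\supset B_{1/2}\cap\Omega\cap\{d<\delta\}$. The lower bound $v\geq -W$ follows by considering $-v$ and using the corresponding subsolution/supersolution duality in \ref{A1}.

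\textbf{Main obstacle.} The delicate point is Step~3: balancing the concavity gain $\lambda M|\psi''(d)|$ against the positive contribution $\Lambda(1+M^\gamma)M$ from the bounded eigenvalues and the gradient RHS $M^{\gamma+1}$. This balance is what forces the precise gap $\gamma_0<1-\gamma$ (resp.\ $\gamma_0<1-\gamma(1-\beta)$), and the choice of the exponent $\gamma_0$ in $\psi$ is exactly what makes the barrier concave enough to absorb the loss from the $(0,\gamma)$-ellipticity ratio $1+|DW|^\gamma\sim M^\gamma$. The role of the $C^\beta$ hypothesis is to allow a larger $M$ (namely $\delta^{-(1-\beta)}$) while still dominating $|v|$ on $\{d=\delta\}$ via the H\"older modulus.
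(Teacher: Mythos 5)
Your proposal is correct and follows essentially the same route as the paper: construct the Birindelli--Demengel barrier $\sim \delta^{-(1-\beta)}\,d/(1+d^{\gamma_0})$, balance the concavity term $\lambda M\psi''(d)\sim -M d^{\gamma_0-1}$ against the degenerate ellipticity ratio $\Lambda(1+M^\gamma)$ and the right-hand side $C_f(1+M^{\gamma+1})$, and read off the gap $\gamma_0 < 1-\gamma(1-\beta)$. The only cosmetic differences are that the paper adds the extended boundary data $g$ to the barrier (rather than subtracting it from $u$) and uses the $C^2$ cutoff $16(|y|-1/2)^3$ vanishing to second order at $|y|=1/2$ (rather than a point-dependent quadratic localization), but the argument is the same in substance.
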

\begin{proof}
    We give the proof of the $u \in C^\beta$ case only.
    For the $u \in L^\infty$ case, it is enough to put $\beta=0$.
    We consider $\Omega_{\delta}:=\{y \in \Omega : d(y) <\delta\}$ with $\delta<\delta_1$ such that if $d(y) <\delta_1$, then the distance function $d$ is $C^2$ and $|D^2d| <K$ for some constant $K>0$.
    We consider an extension of $g$ to $B_1\cap\Omega$ (still denoted by $g$), which is $C^2$ locally in $B_1\cap\Omega$, with $\|g\|_{C^\beta}\le 1$ and whose $C^{1,1}$ norm is controlled by $\|g\|_{C^{1,1}(B_1\cap\partial\Omega)}$.
    The main goal is to construct an upper barrier $w$ which satisfies
    \begin{align*}
    |Dw|^p\mathcal{M}^+_{\lambda,\Lambda(1 + |Dw|^\gamma)}(D^2w) < -C_f(1+|Dw|^{q+1}) < f(x,Dw) \quad \text{in }B_r\cap\Omega_\delta.
    \end{align*}
    We define $w \in C^2(B_1 \cap \Omega_\delta)$ as
    \begin{align*}
        w(y) = \overline{w}(y) + g(y). 
    \end{align*}
    where (see \cite{Birindelli14})
    \begin{align} \label{barw}
		\overline{w}(y)=
		\begin{cases} 
		\frac{4}{\delta^{1-\beta}}\frac{d(y)}{1+d(y)^{\gamma_0}} & \text{for $|y|<1/2$},\\
		\frac{4}{\delta^{1-\beta}}\frac{d(y)}{1+d(y)^{\gamma_0}}+16(|y|-\frac{1}{2})^3 & \text{for $|y| \geq 1/2$}.
		\end{cases}
	\end{align}
    Note that on a point $y\in \{d=\delta\}$, we can find $x \in \partial\Omega$ with $|x-y| = \delta$. Then since $|u(y)-u(x)|,|g(y)-g(x)| \leq 2\delta^\beta$, we obtain
    \begin{align*}
        w(y) &\geq \frac{4}{\delta^{1-\beta}}\frac{\delta}{1+\delta^{\gamma_0}} + g(y) \\ &\geq 4\delta^\beta -|g(y)-g(x)| - |u(x)-u(y)| + u(y) \geq u(y).
    \end{align*}
    Observe that on $y \in \partial B_1\cap\{d<\delta\}$, $w(y) \geq 16(1-1/2)^3 +g(y)\geq 
 1\geq u(y)$. Note also that on $y \in B_1 \cap \partial\Omega$, $w(y)  \geq g(y) =u(y)$.
    Thus we have $w \geq u$ on $\partial(B_1\cap \Omega_\delta)$. Moreover, we have
    \begin{align*}
        D\overline{w}=
		\begin{cases} 
		\frac{4}{\delta^{1-\beta}}\frac{1+(1-\gamma_0)d^{\gamma_0}}{(1+d^{\gamma_0})^2} Dd & \text{for $|y|<1/2$},\\
		\frac{4}{\delta^{1-\beta}}\frac{1+(1-\gamma_0)d^{\gamma_0}}{(1+d^{\gamma_0})^2}Dd+48\frac{y}{|y|}(|y|-\frac{1}{2})^2 & \text{for $|y| \geq 1/2$},
		\end{cases}
    \end{align*}
    so that 
    \begin{align*}
        C_0\leq\frac{2}{\delta^{1-\beta}}\leq|Dw| \leq \frac{8}{\delta^{1-\beta}} \quad \text{ for small } \delta < 1.
    \end{align*}
    for sufficiently small $\delta < 1$ depending on $\norm{g}_{Lip}$ and $C_0$.
    Moreover, we also get
    \begin{align*}
        D^2w &= -\frac{4\gamma_0 }{\delta^{1-\beta} d^{1-\gamma_0}}\frac{1+\gamma_0+(1-\gamma_0)d^{\gamma_0}}{(1+d^{\gamma_0})^3}Dd\otimes Dd + \frac{4}{\delta^{1-\beta}}\frac{1+(1-\gamma_0)d^{\gamma_0}}{(1+d^{\gamma_0})^2}D^2d +H(y) + D^2g,
    \end{align*}
    where $\norm{H(y)} \leq C$.
    Thus, we obtain
    \begin{align} \label{barcal}
    \begin{split}
        \mathcal{M}^+_{\lambda,\Lambda(1+|Dw|^\gamma)}(D^2w) &\leq-\lambda\frac{4\gamma_0 }{\delta^{1-\beta} d^{1-\gamma_0}} + \Lambda\frac{4}{\delta^{1-\beta}}|Dw|^\gamma \norm{D^2d} + \Lambda|Dw|^\gamma\norm{H(y)+D^2g} \\ 
        &\leq  -\frac{1}{C\delta^{2-\beta-\gamma_0}} + \frac{C}{\delta^{(1-\beta)(\gamma+1)} } + \frac{C}{\delta^{(1-\beta)\gamma}}  \\
        &< -C_f\left(1+\frac{C}{\delta^{(1-\beta)(\gamma+1)}}\right) \\
        & <-C_f(1+|Dw|^{\gamma+1})
        \end{split}
    \end{align}
    in $B_1 \cap \Omega_\delta$ for $\delta$ sufficiently small depending on $\norm{g}_{C^{1,1}}$. Note that we have used the fact that $2-\beta-\gamma_0 >(1-\beta)(1+\gamma)$ for the last inequality. 
    Therefore, we concluded that $ w \geq u$ on $B_1 \cap \Omega_\delta$.
    The lower barrier is easily deduced by considering $-\overline{w}+g$. 
\end{proof}
Moreover, using the lemma above, one can find the proof of the boundary regularity for solutions by using the Ishii-Lions technique in the proof of Theorem \ref{Main1}.
\begin{thm} \label{Bdy1}
    Let $u \in C(B_1\cap\overline{\Omega })$ be a viscosity solution of the $(p,q)$-growth problem
    \begin{align*}
        \begin{cases}
        F(D^2u,Du,x) = f(x,Du) \quad &\text{in } \ B_1\cap \Omega \\
        u=g\quad &\text{on } \ B_1\cap \partial \Omega,
        \end{cases}
    \end{align*}  with $g\in C^{1,1}(B_1\cap\partial\Omega)$.
    If $q-p<\alpha$, then we have $u \in Lip(B_{1/2} \cap \overline{\Omega})$ and
    \begin{align*}
        \norm{u}_{Lip(B_{1/2} \cap \overline{\Omega})} \leq C(\norm{u}_{L^\infty(B_1\cap\overline{\Omega})}+1)^\theta,
    \end{align*}
    for some $\theta= \theta(q-p,\alpha)\geq1$ and $C(n,\lambda, \Lambda, p,q,\alpha,C_0, C_f,\norm{g}_{C^{1,1}(B_1\cap\partial\Omega)},\Omega)>0$.
    
    Moreover, if $u \in C^\beta(B_1\cap\overline{ \Omega})$ with $q-p<\min \left\{1 + \frac{\beta}{2(1-\beta)}, \frac{\alpha}{1-\beta} \right\}$, then we have $u \in Lip(B_{1/2} \cap \overline{\Omega})$ and
    \begin{align*}
        \norm{u}_{Lip(B_{1/2} \cap \overline{\Omega})} \leq C(\norm{u}_{C^\beta(B_1\cap\overline{\Omega})}+1)^\theta,
    \end{align*}
    for some $\theta=\theta(q-p,\alpha,\beta)\geq1$ and $C(n,\lambda, \Lambda, p,q,\alpha,\beta,C_0, C_f,\norm{g}_{C^{1,1}(B_1\cap\partial\Omega)},\Omega)>0$.
\end{thm}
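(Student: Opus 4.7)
The plan is to run the interior Ishii--Lions doubling argument from the proof of Theorem \ref{Main1} over $\overline{B_1\cap\Omega}$ instead of $\overline{B_1}$, and to use the barrier of Lemma \ref{bar} to prevent the maximizer from landing on $\partial\Omega$. After the standard reduction to $(0,\gamma)$-growth and the scaling normalization from Section \ref{sec2} (which makes $\norm{u}_{L^\infty(B_1\cap\overline{\Omega})}\le 1$ or $\norm{u}_{C^\beta(B_1\cap\overline{\Omega})}\le 1$), I fix $x_0\in B_{1/2}\cap\overline{\Omega}$ and consider
\begin{align*}
M := \max_{x,y\in\overline{B_1\cap\Omega}}\Big\{u(x)-u(y)-L_1\phi(|x-y|)-\tfrac{L_2}{2}|x-x_0|^2-\tfrac{L_2}{2}|y-x_0|^2\Big\},
\end{align*}
first with $\phi(t)=t^\kappa/\kappa$ (to obtain $u\in C^\kappa$ up to the boundary for any $\kappa<1$ close to $1$) and then with the modified $\phi$ from the second step of the proof of Theorem \ref{Main1} (to upgrade to Lipschitz regularity). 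As there, $L_2$ is chosen large enough that the maximizers $(\bar x,\bar y)$ stay in the interior of $B_1$, while $L_1\to\infty$ will be used to derive a contradiction to $M>0$.

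The only genuinely new difficulty is that $\bar x$ or $\bar y$ may a priori lie on $\partial\Omega$, where the viscosity inequalities for $u$ are unavailable and the Ishii--Lions Lemma cannot be applied. I rule this out using Lemma \ref{bar}. Suppose $\bar x\in\partial\Omega$, so that $u(\bar x)=g(\bar x)$. The (symmetric) lower estimate in Lemma \ref{bar} gives $u(\bar y)\geq g(\bar y')-C_1 d(\bar y)$, where $\bar y'$ is the boundary projection of $\bar y$ and $C_1=C_1(\delta_0,\norm{g}_{C^{1,1}})$. Combining with the Lipschitz regularity of $g$ and $d(\bar y)\leq|\bar x-\bar y|$,
\begin{align*}
u(\bar x)-u(\bar y)\leq |g(\bar x)-g(\bar y')|+C_1 d(\bar y)\leq C_\ast|\bar x-\bar y|,
\end{align*}
and the case $\bar y\in\partial\Omega$ is handled symmetrically via the corresponding upper estimate. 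For both choices of $\phi$ above one has $\phi(t)\geq ct$ on $(0,1]$ for a universal $c>0$, so choosing $L_1>C_\ast/c$ forces $M\leq 0$, contradicting $M>0$. Hence $(\bar x,\bar y)$ lies in the open set $B_1\cap\Omega$.

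Once the maximizers are confined to the interior of $B_1\cap\Omega$, the Ishii--Lions Lemma produces jets $(z,X+L_2 I)\in\overline{\mathcal{J}^{2,+}_\Omega}u(\bar x)$ and $(z,Y-L_2 I)\in\overline{\mathcal{J}^{2,-}_\Omega}u(\bar y)$, and the three-term estimate for $I_1,I_2,I_3$ from the proof of Theorem \ref{Main1} transfers verbatim, yielding first the boundary $C^\kappa$ estimate and then the Lipschitz bound under the same gap conditions $q-p<\alpha$ (in the $L^\infty$ case) and $q-p<\min\{1+\tfrac{\beta}{2(1-\beta)},\tfrac{\alpha}{1-\beta}\}$ (in the $C^\beta$ case). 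The main technical obstacle I expect is the quantitative bookkeeping in the barrier step: the constant $C_\ast$ carries a factor $1/\delta_0^{1-\beta}$ with $\delta_0$ itself depending on all the data, so one must verify that the lower bound on $L_1$ imposed by the barrier is compatible with the (typically much larger) final threshold on $L_1$ coming from the interior Ishii--Lions computation. Since both are lower bounds on $L_1$, taking $L_1$ above their maximum suffices and the two parts of the argument can be combined cleanly.
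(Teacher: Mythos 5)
Your proposal matches the paper's argument: the paper also runs the interior Ishii--Lions doubling over $\overline{B_1\cap\Omega}$, invokes Lemma \ref{bar} to produce the bound $|u(y)-g(y')|\le M_0\,d(y)$ near $\partial\Omega$, and uses that together with $d(y)\le|x-y|<\delta_0$ (forced by large $L_1$) and the Lipschitz continuity of $g$ to rule out a maximizer on $\partial\Omega$, after which the $I_1,I_2,I_3$ estimates carry over unchanged. Your final remark about reconciling the two lower bounds on $L_1$ is the right observation, and since both are indeed lower bounds there is no circularity; this is exactly how the paper's sketch proceeds.
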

\begin{proof}[Sketch of proof]
    The proof is similar to the proof of Theorem \ref{Main1}, so we mention only where the boundary makes difference.
    We fix $x_0 \in B_{1/2} \cap \overline{\Omega}$ and claim that
    \begin{align*}
        M:= \max_{x,y\in \overline{B_1\cap \Omega}}  \left\{ u(x) - u(y) - L_1\phi(|x-y|) - \frac{L_2}{2}|x-x_0|^2 -\frac{L_2}{2}|y-x_0|^2 \right\} \leq 0.
    \end{align*}
    We argue by contradiction by assuming $M>0$ for any large $L_1, L_2$.
    Let $x,y \in \overline{B_1\cap \Omega}$ be the points where the maximum $M$ is attained.
    Notice that by choosing large $L_2$ we have $x,y \in B_{3/4} $.
    We claim that $x,y \notin \partial \Omega$ for large $L_1$.
    By Lemma \ref{bar}, there exists $M_0$ and $\delta_0$ such that for any $y \in B_{3/4} \cap \Omega \cap \{d(y)<\delta_0\}$, we have
    \begin{align*}
        |u(y)-g(y')| \leq M_0d(y).
    \end{align*}
    By choosing large $L_1$, we have $|x-y| <\delta_0$ as in \eqref{dest} or \eqref{dest2}. If $x \in \partial\Omega$, then since $d(y) \leq |x-y|<\delta_0$, we find
    \begin{align*}
        |u(y)-u(x)| &\leq |u(y)-g(y')| + |g(y')-g(x')| \\
        &\leq M_0d(y) + \norm{g}_{Lip}|y'-x'|\\
        &\leq (M_0 + \norm{g}_{Lip})|x-y|.
    \end{align*}
    Thus by choosing large $L_1$ depending on $M_0$ and $\norm{g}_{Lip}$, we have $M\leq0$, which is a contradiction.
    The rest of the proof is to apply the Ishii-Lions method, as seen in the proof of Theorem \ref{Main1}.
\end{proof}

However, using the interior Ishii-Lions method requires a more restrictive gap condition for the $u \in C^\beta$ case.
Instead, using the Lemma \ref{bar} and the global Ishii-Lions method, we get the global regularity for $u \in C^\beta(\overline{\Omega})$ with an optimal gap condition $q-p < \frac{\alpha}{1-\beta}$.

\begin{proof}[Proof of Theorem \ref{Main2}.]
For $u \in L^\infty$, it is standard to prove the Lipschitz regularity for the global case by using Theorem \ref{Main1} and Theorem \ref{Bdy1}.
Therefore we focus on the case $u \in C^\beta$.
By the scaling, we assume $\norm{u}_{C^\beta(\overline{\Omega})} \leq 1$.
As in the proof of the interior case, we show that $u \in C^\kappa$ for any $\beta<\kappa<1$ and then prove the Lipschitz regularity. We claim that 
    \begin{align} \label{gloish}
        M:= \max_{x,y\in \overline{\Omega}} \left\{ u(x) - u(y) - \frac{L}{\kappa}|x-y|^\kappa \right\} \leq 0
    \end{align}
for some large $L$.
We argue by contradiction by assuming $M>0$ for any large $L$.
Let $x,y \in \overline{\Omega}$ be the points where the maximum is attained.
Then we have $\delta=|x-y|\neq0$ and
\begin{align} \label{dest3}
    \delta \leq L^{-\frac{1}{\kappa-\beta}}.
\end{align}
We claim that $x,y \notin \partial\Omega$ for large $L$. If $x \in \partial\Omega$, then by changing the coordinates, we may assume $x=0$ and that $\phi\in C^2$ is the graph of $\Omega \cap B_R$ with $\phi(0)=0$ and $D\phi(0) =0$ for some $R>0$ depending on $\Omega$.
According to a scaling argument and Lemma \ref{bar}, there exist $M_0>1$ and $\delta_0>0$ such that for any $y \in B_{R/2}\cap\Omega \cap \{d(y) < \delta_0\}$, we get
\begin{align*}
    |u(y)-u(y',\phi(y))|\leq M_0 d(y).
\end{align*}
Then by choosing large $L$ and using \eqref{dest3}, we have $d(y) \leq |x-y| < \delta_0$ and $y \in B_{R/2}$.
Therefore, we get
\begin{align*}
    |u(y)-u(0)| &\leq |u(y)-u(y',\phi(y))| + |u(y',\phi(y))-u(0)| \\
    &\leq M_0d(y,\partial\Omega) + \norm{g}_{Lip}(|y'| +|\phi(y)|) \\
    &\leq M_0|y| + \norm{g}_{Lip}(1+\norm{\phi}_{Lip})|y|.
\end{align*}
Thus $M\leq0$ by choosing $L$ large enough, which makes a contradiction so that $x,y \notin \partial\Omega$.
Therefore applying the Ishii-Lions lemma (Lemma \ref{IL1}) to $u(x)$ and $u(y)$, there exist $X,Y \in S(n)$ such that
\begin{align*}
    (z, X)\in \overline{\mathcal{J}}^{2,+} \left( u(x)\right), \\
    (z, Y)\in \overline{\mathcal{J}}^{2,-} \left( u(y)\right),
\end{align*}
where
\begin{align} \label{zest3}
     z= \frac{L}{\delta^{1-\kappa}} a \quad \text{and} \quad a= \frac{x-y}{|x-y|}.
\end{align}
Moreover, as in the proof of the Theorem \ref{Main1}, we can choose $X,Y \in S(n)$ satisfying
\begin{align} \label{Xest3}
\begin{split}
    \norm{X}, \norm{Y} &\leq 4\frac{L}{\delta^{2-\kappa}}, \\
    \mathcal{M}^+_{\lambda, \Lambda(1+|z|^{\gamma})}(X-Y) &\leq -C\frac{L}{\delta^{2-\kappa}}.
\end{split}
\end{align}
By the definition of viscosity solution, we have
\begin{align*}
    F(X, z, x) \geq f(x,z) \quad \text{and} \quad F(Y, z, y) \leq f(y,z),
\end{align*}
so that
\begin{align*}
    -2C_f(1+|z|^{\gamma+1}) \leq f(x,z)-f(y,z) &\leq F(X, z, x) -F(Y, z, y) \\
        &=(F(X ,z, x) - F(Y, z, x)) \\
        &+ (F(Y, z, x) - F(Y, z, y)) \\
        &=:I_1 + I_3.
\end{align*}
Observe that the difference of gradient term `$I_2$' does not appear and so the assumption \ref{A2}  is not necessary.
Now using the assumption \ref{A1}, \ref{A3} and \eqref{Xest3}, we find
\begin{align*}
    I_1 &\leq \mathcal{M}^+_{\lambda, \Lambda(1+|z|^{\gamma})}(X-Y) \leq -C \frac{L}{\delta^{2-\kappa}}, \\
    I_3 &\leq \Lambda|x-y|^\alpha |z|^\gamma \norm{Y} \leq C\delta^\alpha|z|^\gamma\frac{L}{\delta^{2-\kappa}}.
\end{align*}
Therefore, we obtain
\begin{align*}
    \frac{L}{\delta^{2-\kappa}} \leq C \left( |z|^{\gamma+1} + \delta^\alpha|z|^\gamma\frac{L}{\delta^{2-\kappa}} \right).
\end{align*}
Using \eqref{zest3}, we get
\begin{align*}
    1 \leq C\left( \delta^{2-\kappa-(\gamma+1)(1-\kappa)}L^{\gamma} +\delta^{\alpha - \gamma(1-\kappa)}L^\gamma\right).
\end{align*}
Note that if $\kappa$ is close to $1$, then the exponents of $\delta$ are positive.
Finally, using \eqref{dest3}, we conclude
\begin{align*}
    1 \leq C \left( L^{-\frac{1}{\kappa-\beta}(1+\gamma(1-\beta))} + L^{-\frac{1}{\kappa-\beta}(\alpha-\gamma(1-\beta))} \right).
\end{align*}
If $\gamma < \frac{\alpha}{1-\beta}$, then the exponents of $L$ are negative, which makes a contradiction for large $L$.
Therefore, $u$ is $C^\kappa$ for any $\kappa<1$.
Observing that the proof of Lipschitz regularity is the same as that for the interior case, Theorem \ref{Main1}, we finish the proof.
\end{proof}
\subsection*{Acknowledgement}
The authors thank the anonymous referees for their valuable comments and suggestions that improved the quality and clarity of the paper. 
S.-S. Byun was supported by Mid-Career Bridging
Program through Seoul National University.
H. Kim was supported by the National Research Foundation of Korea(NRF) grant funded by the Korea government [Grant No. 2022R1A2C1009312].

\bibliographystyle{amsplain}
\bibliography{ref}

\end{document}